\newtheorem{theorem}{Theorem}[section]
\theoremstyle{remark}
\newtheorem*{remark}{Remark}
\title{An Integro-differential Model of Cadmium Yellow Photodegradation}
\author{ \href{https://orcid.org/0000-0002-7055-9323}{\includegraphics[scale=0.1]{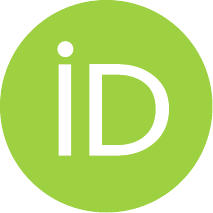}}\hspace{1mm}Maurizio~Ceseri\thanks{Deceased on August $5^{\text{th}}$, 2024.} \\
	C.N.R. National Research Council of Italy,\\
	Institute for Applied Mathematics {\small``Mauro Picone''}\\
	Via dei Taurini, 19 - 00185 Rome, Italy. \\
	\texttt{maurizio.ceseri@cnr.it} \\
	\And
	\href{https://orcid.org/0000-0002-5453-6383}{\includegraphics[scale=0.1]{orcid.pdf}}\hspace{1mm}Roberto~Natalini\thanks{Members of the INdAM Research group GNAMPA} \\
	C.N.R. National Research Council of Italy,\\
	Institute for Applied Mathematics {\small``Mauro Picone''}\\
	Via dei Taurini, 19 - 00185 Rome, Italy. \\
	\texttt{roberto.natalini@cnr.it} \\
	\And
	\href{https://orcid.org/0000-0002-1869-945X}{\includegraphics[scale=0.1]{orcid.pdf}}\hspace{1mm}Mario~Pezzella\thanks{Members of the INdAM Research group GNCS} \\
 	C.N.R. National Research Council of Italy,\\
	Institute for Applied Mathematics {\small``Mauro Picone''}\\
	Via P. Castellino, 111 - 80131 Naples - Italy. \\
	\texttt{mario.pezzella@cnr.it} \\
}
\begin{document}
\maketitle
\begin{abstract}
	Many paintings from the $19^{\text{th}}$ century have exhibited signs of fading and discoloration, often linked to cadmium yellow, a pigment widely used by artists during that time. In this work, we develop a mathematical model of the cadmium sulfide photocatalytic reaction responsible for these damages. By employing non-local integral operators, we capture the interplay between chemical processes and environmental factors, offering a detailed representation of the degradation mechanisms.
    Furthermore, we present a second order positivity-preserving numerical method designed to accurately simulate the phenomenon and ensure reliable predictions across different scenarios, along with a comprehensive sensitivity analysis of the model. 
\end{abstract}

\keywords{Integro-differential models \and 
Photochemical reactions \and
Cultural heritage \and
Positivity-preserving numerical methods}
\msc{45J05, 45D05, 65R20, 65D32, 74A65, 74F25}

\section{Introduction}
Cadmium yellow \cite{Original_Yellow_Pigments,faulkner2009high,fiedler1986cadmium}, whose synthesis was originally described by Gay-Lussac in 1818, was extensively employed by artists throughout the $19^{\text{th}}$ and $20^{\text{th}}$ centuries, including Pablo Picasso \cite{Picasso2,Picasso1}, Joan Miró \cite{Miro1}, Edvard Munch \cite{Urlo_Munch_Degrad,Monico_Urlo}, Henri Matisse \cite{Matisse2,Matisse3,Pouyet2015}, Claude Monet \cite{Monet1}, James Ensor \cite{Van_der_Snickt} and Vincent van Gogh \cite{VanderSnickt2012}.
When exposed to light, this synthetic pigment, primarily composed of cadmium sulfide ($CdS$), undergoes a photocatalytic reaction that results in color degradation, posing a significant challenge to the long-term preservation of pictorial matrices and cultural heritage. The first systematic study of the $CdS$ deterioration mechanism dates back to 2005 with the work \cite{Leone_First}, which investigated paintings from 1887 to 1923 using X-ray diffraction and scanning electron microscopy. Since then, a range of significant multidisciplinary contributions has emerged in the scientific literature, furthering the knowledge on the topic \cite{  Computational_CdS1,Computational_CdS2,Computational_CdS3,monico2018role,Pisu,Pisu_Materials}.  

From a chemical perspective, $CdS$ reacts with environmental humidity and oxygen to form cadmium sulfate ($CdSO_4$), with light acting as an activator. Specifically, since the cadmium sulfide behaves as a semiconductor with a definite band-gap energy \cite{Band_Gap} $E_{bg}=2.42$ eV, the reaction is initiated by supra-band-gap light with photon energy 
\begin{equation*}
    E_M=\dfrac{\texttt{h}\texttt{c}}{\lambda_M}> E_{bg}
\end{equation*}
and therefore by light within a spectral range corresponding to wavelengths shorter than $\lambda_M=512.331$ nm (here,  $c=2.99792\cdot 10^{17}$ $\text{nm} \cdot \text{s}^{-1}$ denotes the speed of light and $h=4.13567 \cdot 10^{-15}$ $ \text{s}\cdot\text{eV}$ is the Planck constant). Moreover, observations indicate that temperature has a minimal impact on the deterioration effect, which is more pronounced with increased relative humidity. The overall process, described in \cite{monico2018role} as follows
\begin{equation}\label{eq:chemical}
CdS + 4h^+_{surf} + 2H_2O + O_2 \longrightarrow CdSO_4 + 4H^+,
\end{equation}
results in a noticeable color change and the formation of a thin ($5-8$ \textmu m) layer of cadmium sulfate on the paint surface. 

The objective of this work is twofold. Firstly, due to the absence of similar approaches in the literature, we develop a novel integro-differential mathematical model that represents the photochemical degradation process of cadmium yellow, with particular emphasis on the non-local effects of light exposure. Secondly, to ensure efficient and realistic simulations of the phenomenon, we design an accurate numerical method that unconditionally preserves the essential properties of the continuous model, namely the positivity and monotonicity of its solutions. Therefore, we aim to construct a comprehensive mathematical framework that enhances the understanding of the phenomenon and supports the development of effective preservation strategies for cultural heritage.

The manuscript is structured as follows. Section \ref{sec:M_Model} presents the derivation of our model and establishes the foundation for the subsequent analysis.  In Section \ref{sec:Num_Method} we introduce a dynamically consistent numerical method and provide evidence of its quadratic convergence and advantageous performances. Some simulations under various scenarios are discussed in Section \ref{sec:Simulations} and a detailed local sensitivity analysis of the model is conducted in Section \ref{sec:Sensitivity}. Finally, closing remarks and perspectives for future investigation in Section \ref{sec:Conclusions} conclude the paper.

\section{Mathematical Modeling}\label{sec:M_Model} We develop an integro-differential mathematical model to describe the chemical degradation of a thin cadmium yellow paint with thickness $L$, exposed to a humidity level $w(z)$ and illumination $I_0(\lambda)$ that are constant over time. In what follows, $z \in [0, L]$ represents the depth within the paint and $\lambda \in [\lambda_m, \lambda_M]$ is the light wavelength. Let $c(z, t)$ and $g(z, t)$ denote the concentrations, at time $t \in [0, \mathsf{T}],$ of $CdS$ and $CdSO_4$, respectively. The reaction in \eqref{eq:chemical} is modeled by the following fully conservative Production-Destruction System (PDS)
\begin{equation}\label{eq:PDS_Chem}
    \begin{cases}
        \dfrac{\partial c}{\partial t}(z,t)=-k_1(c(z,t),g(z,t)) \ c(z,t) \ w(z), \\[0.25cm]
        \dfrac{\partial g}{\partial t}(z,t)=k_1(c(z,t),g(z,t))\ c(z,t)\ w(z), \qquad\qquad (z,t)\in [0,L]\times [0,\mathsf{T}],
    \end{cases}
\end{equation}
where $c_0(z)=c(z,0),$ $g_0(z)=g(z,0)$ are known initial conditions and $k_1(\cdot)$ is the concentrations-dependent photodegradation rate,  which will be defined in detail later. The conservativity property of the PDS \eqref{eq:PDS_Chem} (we refer to \cite{anderHeiden1982} for further details, or to \cite{PDS} and references therein) ensures that, regardless of the form of $k_1(\cdot)$, the following linear invariant holds
\begin{equation*}
    c(z,t) + g(z,t) = c_0(z) + g_0(z), \qquad \forall \ (z,t) \in [0,L] \times [0,\mathsf{T}].
\end{equation*}
Therefore, under the assumption that no cadmium sulfate is present at the onset of the process, i.e. $g_0(z)=0$ for all $0\leq z \leq L,$ we get $g(z,t)=c_0(z)-c(z,t)$ and the equation for the $CdSO_4$ can be disregarded.

To accurately model the reaction \eqref{eq:chemical}, the kinetic rate $ k_1(\cdot)$ incorporates several dependencies. According to the Beer-Lambert law \cite{d2012simplified}, the light intensity is defined as follows
\begin{equation*}
    I\left(\lambda,\int_0^z    c(\zeta,t)  \ d\zeta\right)=I_0(\lambda)\exp\left\{-\int_0^z\mu_a(\lambda,c(\zeta,t),g(\zeta,t)) \ d\zeta\right\}, \qquad \lambda\in[\lambda_m,\lambda_M],
\end{equation*}
where the system's absorbance $\mu_a$ reads
\begin{equation*}
    \mu_a(\lambda,c(z,t),g(z,t))=\varepsilon_c(\lambda)c(z,t)+\varepsilon_g(\lambda)g(z,t)=(\varepsilon_c(\lambda)-\varepsilon_g(\lambda))c(z,t)+\varepsilon_g(\lambda)c_0(z),
\end{equation*}
with $\varepsilon_c(\lambda)$ and $\varepsilon_g(\lambda)$ representing the molar absorptivities of $CdS$ and $CdSO_4$, respectively (their relation to reflectance is discussed in Section \ref{sec:Simulations}). Here, following the approach in \cite{clarelli2013fluid,Eilers,Thébault}, we describe the overall light penetration effect with the term
\begin{equation*}
    \int_{\lambda_m}^{\lambda_M}\frac{2\hat{I}\left(\lambda,\int_0^z    c(\zeta,t)  \ d\zeta\right)}{\hat{I}^2\left(\lambda,\int_0^z    c(\zeta,t)  \ d\zeta\right)+1} \ d\lambda,
\end{equation*}
where $\hat{I}\left(\cdot\right)=I\left(\cdot\right)/I_{ref}$ and  $I_{ref}$ is a given reference value. Incorporating the effects of light, of the environmental temperature $T$ via the Arrhenius law \cite{Atkins_2016} and of the humidity level $w(z),$ into the PDS \eqref{eq:PDS_Chem} yields
\begin{equation}\label{eq:5}
    \begin{cases}
         \displaystyle\dfrac{\partial c}{\partial t}(z,t)=-A\exp\left\{-\frac{E_a}{RT}\right\}w(z) \ c(z,t)\int_{\lambda_m}^{\lambda_M}\dfrac{2\hat{I}(\lambda, \int_0^z    c(\zeta,t)  \ d\zeta)}{\hat{I}^2(\lambda, \int_0^z    c(\zeta,t)  \ d\zeta)+1} \ d\lambda, \\[0.4cm]
         \displaystyle\hat{I} \! \left( \! \lambda, \! \int_0^z  \!\! c(\zeta,t) \ d\zeta \! \right) \! = \! \dfrac{I_0(\lambda)}{I_{ref}} \exp  \left\{ \! -  \mu \! \left( \! (\varepsilon_c(\lambda) -\varepsilon_g(\lambda)) \! \int_0^z \!\!   c(\zeta,t)  \ d\zeta + \varepsilon_g(\lambda) C_0(z) \! \right) \! \right\} \! ,
    \end{cases}
\end{equation}
where $z\in [0,L]$, $t\in [0,\mathsf{T}]$, $C_0(z)=\int_0^z   c_0(\zeta)  \ d\zeta,$ $E_a$ is the reaction activation energy and $R$ is the gas constant. 

\begin{remark}
    As experimental observations have not yet detected any processes of spatial diffusion, the system in \eqref{eq:PDS_Chem} does not explicitly account for variations along the variable $z \in [0,L]$. However, the spatial dependence of the phenomenon is incorporated in the model \eqref{eq:5} through the use of non-linear, non-local integral operators.
\end{remark}

\subsection{Dimensionless Formulation}\label{subsec:Adim_Model} In this subsection we reformulate the mathematical model \eqref{eq:5} in a non-dimensional form. Assuming that the reference values 
    \begin{equation*}
         [c^{ref}]=\text{mol}/\text{cm}^3, \quad [w^{ref}]=\text{mol}/\text{cm}^3, \quad [\varepsilon_c^{ref}]=\text{cm}^{2}/\text{mol},\quad [\varepsilon_g^{ref}]=\text{cm}^{2}/\text{mol},
    \end{equation*}
are given, we introduce the dimensionless parameters 
    \begin{equation}\label{eqref:parametri_ADIM}
       \nu = \dfrac{\varepsilon_c^{ref}}{\varepsilon_g^{ref}}, \qquad \mu=L c^{ref} \varepsilon_g^{ref}, \qquad  \xi=(\lambda_M-\lambda_m) w^{ref}  \mathsf{T}A\exp\left\{-\frac{E_a}{RT}\right\}.
    \end{equation}
Furthermore, denoted by $\Omega=[0,1]$, we linearly rescale the variables as follows 
\begin{equation}\label{eq:transformations}
   z\in [0,L]\mapsto \tilde{z}=\dfrac{z}{L}, \quad  t \in [0,\mathsf{T}] \mapsto \tilde{t}=\dfrac{t}{\mathsf{T}}, \quad \lambda  \in  [\lambda_m,\lambda_M] \mapsto \tilde{\lambda}=\dfrac{\lambda-\lambda_m}{\lambda_M-\lambda_m}, 
\end{equation}
and define, for $\tilde{z},$ $\tilde{t}$ and $\tilde{\lambda}$ belonging to $\Omega,$ the dimensionless functions
    \begin{equation}\label{eqref:funzioni_ADIM}
        \begin{split}
            &\tilde{c}(\tilde{z},\tilde{t})=\dfrac{c(\tilde{z} L,\tilde{t} \mathsf{T})}{c^{ref}}, \qquad \;\; \tilde{c}_0(\tilde{z})=\dfrac{c_0(\tilde{z} L)}{c^{ref}}, \qquad \;\; \tilde{w}(\tilde{z})=\dfrac{w(\tilde{z}  L)}{w^{ref}}, \\ 
            &\tilde{I}(\tilde{\lambda})=\frac{I_0(\lambda_0+\tilde{\lambda} (\lambda_M-\lambda_m))}{I_{ref}}, \qquad \quad  \   
            \tilde{\varepsilon}_g(\tilde{\lambda})=\dfrac{\varepsilon_g(\lambda_0+\tilde{\lambda} (\lambda_M-\lambda_m) )}{\varepsilon_g^{ref}}, \\
            &\tilde{\varepsilon}_c(\tilde{\lambda})=\dfrac{\varepsilon_c(\lambda_0+\tilde{\lambda} (\lambda_M-\lambda_m))}{\varepsilon_c^{ref}}, 
            \qquad \quad 
            \tilde{\varepsilon}_\nu(\tilde{\lambda})=\nu \tilde{\varepsilon}_c(\tilde{\lambda}) - \tilde{\varepsilon}_g(\tilde{\lambda}). \\
        \end{split}
    \end{equation}
From now on, the initial $CdS$ concentration is assumed to be constant in space, so that $c_0(z)=c^{ref}$ for all $z \in [0, L],$ and $\tilde{c}_0(\tilde{z})=1$ for all $\tilde{z} \in \Omega$. Moreover, we exclusively refer to the non-dimensional variables and functions, which, for simplicity, will still be denoted without the tilde.

The model \eqref{eq:5} is then restated as follows
\begin{equation}\label{eq:Continuous_model:ADIM}
    \frac{\partial c}{\partial t}(z,t)=-\xi \ w(z)c(z,t)   \int_{0}^{1}\dfrac{2\hat{I}\left(\lambda,z,\int_0^z c(\zeta,t) \ d\zeta\right)}{\hat{I}^2\left(\lambda,z,\int_0^z c(\zeta,t) \ d\zeta\right)+1}\ d\lambda, \qquad  (z,t)\in \Omega \times \Omega,
\end{equation}
with 
\begin{equation}\label{eq:Adimensional_I}
    \hat{I}\left( \lambda,z,\int_0^z   c(\zeta,t) \ d\zeta \right)=I(\lambda) \exp\left\{ -  \mu \left(  \varepsilon_\nu (\lambda)   \int_0^z    c(\zeta,t)  \ d\zeta +z \varepsilon_g(\lambda) \right) \right\}.
\end{equation}
Compared to the original formulation \eqref{eq:5}, the dimensionless form \eqref{eq:Continuous_model:ADIM}-\eqref{eq:Adimensional_I} offers several advantages, including the reduction in the number of parameters.

\section{Unconditionally Positive and Monotonic Numerical Method}\label{sec:Num_Method}
Since the function $c(z,t)$ represents the dimensionless concentration of a chemical reactant undergoing degradation, our focus is on solutions to \eqref{eq:Continuous_model:ADIM} that are positive and monotonically non-increasing over time. Remarkably, the development of numerical methods that are both of high order and unconditionally preserve the aforementioned properties, poses significant challenges. Non-standard finite difference discretizations have proven effective in constructing positivity-preserving schemes for integro-differential systems (cf. \cite{MPV_NSFD,MPV_NLFD} and references therein). However, the linear convergence of the resulting integrators, which may represent a limitation in simulating realistic scenarios, motivates us to adopt a different approach built on a suitable reformulation of the continuous model. 

Straightforward manipulations of \eqref{eq:Continuous_model:ADIM} lead to the following implicit Volterra Integral Equation (VIE) (see, for instance, \cite[Section 3.4]{Brunner_2004}) 
\begin{equation}\label{eq:VFIE_model}
        \displaystyle c(z,t)=\exp\left\{-\xi \ w(z) \int_{0}^t  \int_{0}^{1}\dfrac{2\hat{I}\left(\lambda,z,\int_0^z c(\zeta,\tau) \ d\zeta\right)}{\hat{I}^2\left(\lambda,z,\int_0^z c(\zeta,\tau) \ d\zeta\right)+1}\ d\lambda \ d \tau \right\}, \qquad  (z,t)\in \Omega \times \Omega, 
\end{equation}
with a non-local non-linearity depending on the function $\hat{I}(\cdot)$ defined in \eqref{eq:Adimensional_I}. The idea of the exponential reformulation, introduced in \cite{MPV_Axioms,MPV_2024} for epidemic models based on second kind convolution VIEs, has the advantage of yielding unconditionally positive schemes when the integrals are approximated. However, although the direct discretization of the embedded memory operators in \eqref{eq:VFIE_model} with quadrature rules might provide a viable option for designing accurate schemes, it would result in purely implicit methods and computationally demanding algorithms. Moreover, ensuring unconditional monotonicity of the numerical solution could be challenging in such cases. Here, we introduce a fully explicit, second order scheme which preserves both the positivity and the monotonicity of the solution with no limitations on the discretization steplengths.

Let $\Delta z,\Delta t,\Delta \lambda$ $>0$ represent the dimensionless spatial, temporal and frequency stepsizes, respectively. Define $N_z,$ $N_t$ and $N_\lambda$ as positive integers such that $1=N_z \Delta z=N_t \Delta t=N_\lambda \Delta \lambda.$ Consider the uniform meshes
$z_j=j\Delta z$ for $0\leq j\leq N_z,$ $t_n=n\Delta t$ for $0\leq n\leq N_t$ and $\lambda_l=l \Delta \lambda$ for $0\leq l\leq N_\lambda.$ We denote by $F$ the function $x\in \mathbb{R}^+_0\to (2x)/(x^2+1)\in \mathbb{R}^+_0$ and by $c^j_n\approx c(z_j,t_n)$ the approximation of the solution to \eqref{eq:VFIE_model}. In this setting, given $c_0^j=1,$ $j=0,\dots,N_z,$ we propose the fully-explicit numerical method
\begin{equation}\label{eq:PC method ADIM}
    \begin{cases}
        \alpha_{n}^{j,l}   = \displaystyle I(\lambda_l)\exp\left\{- \mu\Delta z \left( \varepsilon_\nu (\lambda_l)\sum_{r=0}^{j-1}c_n^r + j\varepsilon_g(\lambda_l)  \right)\right\}, \\
        p_{n+1}^j =c_n^j \displaystyle \exp\left\{-\Delta t \Delta\lambda \ \xi w(z_j)    \sum_{l=0}^{N_{\lambda}-1} F(\alpha_n^{j,l}) \right\}, \\
        \beta_{n}^{j,l}   = \displaystyle I(\lambda_l)\exp\left\{-\mu \dfrac{\Delta z}{2} \left(\varepsilon_{\nu}(\lambda_l)\left(c_n^0+2\sum_{r=1}^{j-1}c_n^r+c_n^j\right) + 2j \varepsilon_g(\lambda_l) \right)\right\},\\
        \beta_{n+1}^{j,l}   = \displaystyle I(\lambda_l)\exp\left\{-\mu \dfrac{\Delta z}{2} \left(\varepsilon_{\nu}(\lambda_l)\left(p_{n+1}^0+2\sum_{r=1}^{j-1}p_{n+1}^r+p_{n+1}^j\right) + 2j \varepsilon_g(\lambda_l)  \right)\right\}, \\
        \gamma^j_{n+1}    = \displaystyle F(\beta_n^{j,0})+F(\beta_{n+1}^{j,0})+ 2\!\sum_{l=1}^{N_{\lambda}-1}\!\left(F(\beta_n^{j,l}) + F(\beta_{n+1}^{j,l})\right)+ F(\beta_n^{j,N_{\lambda}})+F(\beta_{n+1}^{j,N_{\lambda}}), \\
         c_{n+1}^j   \displaystyle=c_n^j \exp\left\{-\dfrac{\Delta t}{2} \dfrac{\Delta \lambda}{2} \ \xi w(z_j)\gamma^j_{n+1}    \right\}, \qquad\qquad\qquad\qquad \;\;
         \begin{array}{l}
              n=0,\dots, N_t-1, \\
              j=0,\dots,N_z, 
         \end{array}\\
    \end{cases}
\end{equation}
formulated utilizing a Predictor-Corrector (PC) strategy (see, for instance, \cite{Garey1972}). More specifically, a rectangular rule discretization of the integrals in \eqref{eq:VFIE_model} is here adopted for the predictive step, while the correction process is designed mimicking the trapezoidal rule (we refer to \cite{Davis2014} and references therein for further details on quadrature discretizations).

The following result addresses the unconditional positivity and monotonicity of the PC numerical solution.
\begin{theorem}
    Consider equation \eqref{eq:PC method ADIM} and assume that $I(\lambda)\geq 0,$ for all $\lambda\in\Omega.$ Then, independently of the positive values $\Delta x$, $\Delta t$, $\Delta \lambda,$ the solution $\{c^j_n\}_{n \in \mathbb{N}_0},$ $j\geq 0$ is positive and non-increasing with respect to $n$.
\end{theorem}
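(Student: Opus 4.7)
I proceed by induction on $n$, exploiting the multiplicative exponential structure of \eqref{eq:PC method ADIM}: since each update has the form ``previous value times an exponential'', positivity is automatic provided the exponent is finite, while the non-increasing property reduces to showing that this exponent is non-positive.

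Three elementary facts will be used repeatedly. First, because $\exp(\cdot)$ is strictly positive on $\mathbb{R}$, the quantities $\alpha_n^{j,l}$, $\beta_n^{j,l}$ and $\beta_{n+1}^{j,l}$ are all nonnegative as soon as $I(\lambda_l)\geq 0$, regardless of the sign of their internal exponents (so that the values of $\varepsilon_\nu(\lambda_l)$, $\varepsilon_g(\lambda_l)$, and of the partial sums of $c_n^r$ and $p_{n+1}^r$ need no sign control). Second, the auxiliary function $F(x)=2x/(x^2+1)$ is nonnegative on $\mathbb{R}_0^+$. Third, the physical parameters $\xi$ and $w(z_j)$ appearing in the outer exponents of the $p$- and $c$-updates are nonnegative, as are the steplengths $\Delta t$ and $\Delta\lambda$ (recall \eqref{eqref:parametri_ADIM} and that $w$ represents a humidity profile).

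With these in hand, the induction runs as follows. The base case $n=0$ holds trivially since $c_0^j=1>0$ for every $j$. Assuming $c_n^j>0$ for all $j=0,\ldots,N_z$, the three facts yield $\alpha_n^{j,l}\geq 0$, hence $\sum_{l=0}^{N_\lambda-1}F(\alpha_n^{j,l})\geq 0$, hence the exponent in the update for $p_{n+1}^j$ is non-positive, producing $0<p_{n+1}^j\leq c_n^j$. Applying the same chain to the corrector layer gives $\beta_n^{j,l},\beta_{n+1}^{j,l}\geq 0$, therefore $\gamma_{n+1}^j\geq 0$ as a sum of nonnegative $F$-values, and finally $0<c_{n+1}^j\leq c_n^j$. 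This closes the step and simultaneously establishes both the positivity of $\{c_n^j\}_n$ and its non-increasing character with respect to $n$.

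\textbf{Expected difficulty.} There is no real analytic obstacle: the scheme has been engineered so that the sign checks are immediate, with positivity of exponentials and nonnegativity of $F$ on $\mathbb{R}_0^+$ doing all the work. The only point requiring some care, within a single time step, is the ordering between predictor and corrector, since $\beta_{n+1}^{j,l}$ depends on the predictor values $p_{n+1}^r$ and must therefore be analysed after the predictor. However, only the finiteness of $p_{n+1}^r$, not its sign, is needed to bound $\beta_{n+1}^{j,l}$, so no delicate propagation of inequalities arises and the argument collapses to a clean induction on $n$.
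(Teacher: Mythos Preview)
Your proposal is correct and follows essentially the same route as the paper's own proof: positivity is read off directly from the multiplicative exponential structure of the last line of \eqref{eq:PC method ADIM}, the nonnegativity of $\beta_n^{j,l}$ and $\beta_{n+1}^{j,l}$ (via $I(\lambda_l)\geq 0$) combined with $F:\mathbb{R}^+_0\to\mathbb{R}^+_0$ gives $\gamma^j_{n+1}\geq 0$, and induction on $n$ closes the monotonicity. Your write-up is in fact somewhat more explicit than the paper's, in that you spell out the role of the predictor layer and make the (implicit but necessary) use of $\xi\geq 0$ and $w(z_j)\geq 0$ visible.
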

\begin{proof}
    The positivity of the sequence $\{c^j_n\}_{n \in \mathbb{N}_0}$ naturally follows, independently of $j\geq 0,$ from the last equation in \eqref{eq:PC method ADIM}. By definition $\beta_{n}^{j,l}$ and $\beta_{n+1}^{j,l}$ are positive as well, so that  $F:\mathbb{R}^+_0\to \mathbb{R}^+_0$ implies $\gamma^j_{n+1}>0,$ for any $n$ and $j.$ Standard inductive arguments then establish the monotonicity property $c_{n+1}^j\leq c_{n}^j\leq 1$ for all $j$ and $n\in \mathbb{N}_0.$
\end{proof}

\subsection{Convergence and Performances}
The following result establishes the quadratic convergence of the numerical method \eqref{eq:PC method ADIM} as $\Delta t,\Delta z,\Delta \lambda \to 0^+$. The proof, which is based on a combination of the arguments presented in \cite[Section 3]{Garey1972} and \cite[Section 3.1]{MPV_2024}, is highly technical and is therefore omitted here, as it goes beyond the scope of this work. 

\begin{theorem}\label{thm:convergence}
    Assume that the known function $I(\lambda)$ is continuously differentiable on $\Omega.$ Denote by $c(z,t),$ for $(z,t)\in \Omega \times \Omega$, the continuous solution to \eqref{eq:Continuous_model:ADIM} and by $\{c^j_n, \; 0\leq n \leq N_t, \; 0\leq j \leq N_z\}$ its approximation computed by the PC scheme \eqref{eq:PC method ADIM}. Let $N_z,$ $N_t$ and $N_\lambda$ be positive integers such that $1=N_z \Delta z=N_t \Delta t=N_\lambda \Delta \lambda.$ Then, the global discretization error $E(\delta z, \Delta t, \Delta \lambda ; t_n,z_j)=|c(z_j,t_n)-c^j_n|$ satisfies
    \begin{equation*}
        \max_{\substack{0\leq j \leq N_z \\ 0\leq n \leq N_t}} E(\Delta z, \Delta t, \Delta \lambda ; t_n,z_j) \leq \Theta  (\Delta z^2+\Delta t^2+\Delta \lambda^2),
    \end{equation*}
    with $\Theta$ positive constant independent of the stepsizes. Therefore, the scheme \eqref{eq:PC method ADIM} is convergent of order two.
\end{theorem}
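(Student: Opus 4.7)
The plan is to measure the defect between the exact solution $c(z_j,t_n)$ of the VIE \eqref{eq:VFIE_model} and the PC iterate $c_n^j$ by expanding the three nested composite integrals that appear in the exponent of \eqref{eq:VFIE_model} against the quadrature rules that define \eqref{eq:PC method ADIM}. Under the continuous differentiability assumption on $I(\lambda)$ and the regularity of $w$, $\varepsilon_c$, $\varepsilon_g$ and of the solution $c(z,t)$ itself, standard Taylor expansions give composite trapezoidal errors of sizes $O(\Delta z^2)$, $O(\Delta\lambda^2)$ and $O(\Delta t^2)$ for the spatial, frequency and temporal integrals in the corrector stage, and composite rectangle errors of first order for the corresponding sums in the predictor stage.

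The second step is to exploit the predictor-corrector structure to upgrade the first-order predictor to second-order accuracy in the final update, following the argument in \cite[Section 3]{Garey1972}. The value $p_{n+1}^j$ enters $c_{n+1}^j$ exclusively through $\beta_{n+1}^{j,l}$, which is then fed into $F(x)=2x/(x^2+1)$, summed over $\lambda$, and multiplied by the step $\Delta t/2$ inside the outer exponential. Since $F$ is globally Lipschitz on $\mathbb{R}_0^+$, and the exponential is Lipschitz on any bounded set of non-positive arguments (with the arguments here bounded thanks to the monotonicity $0\leq c_n^j\leq 1$ already proved), the $O(\Delta t+\Delta z+\Delta\lambda)$ predictor defect gets multiplied by the prefactor $\Delta t$ coming from the trapezoidal step, producing a contribution of order $\Delta t(\Delta t+\Delta z+\Delta\lambda)$ to the local consistency error of the corrector; combined with the trapezoidal errors of step one, this yields a local defect bounded by a constant times $\Delta z^2+\Delta t^2+\Delta\lambda^2$.

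The third step is the stability argument. Using the elementary bounds $|e^{-a}-e^{-b}|\leq|a-b|$ for $a,b\geq 0$ and the Lipschitz property of $F$, I propagate a perturbation of the running discrete spatial integral $\Delta z\sum_r c_n^r$ through $\alpha_n^{j,l}$, $\beta_n^{j,l}$ and then through the $\lambda$-sum and the time exponential, obtaining a recursive inequality of the form
\begin{equation*}
E_{n+1}^{j}\;\leq\;C_1(\Delta z^2+\Delta t^2+\Delta\lambda^2)+C_2\,\Delta t\sum_{m=0}^{n}\Big(\Delta z\sum_{r=0}^{j}E_m^r+E_m^j\Big),
\end{equation*}
where $E_n^j=|c(z_j,t_n)-c_n^j|$ and $C_1,C_2$ are independent of the stepsizes. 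A two-index discrete Gronwall lemma, applied iteratively in $j$ and in $n$ as in \cite[Section 3.1]{MPV_2024}, then closes the estimate and delivers the uniform bound $\max_{j,n}E_n^j\leq\Theta(\Delta z^2+\Delta t^2+\Delta\lambda^2)$ with $\Theta$ independent of the discretization parameters.

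The main obstacle I expect is the bookkeeping in the second step: one has to verify carefully that the first-order predictor error, when it enters the three nested functionals (spatial Riemann sum, then $F$, then $\lambda$-sum, then exponential), is always multiplied by at least one factor of $\Delta t$ before it reaches $c_{n+1}^j$, and that the constants in the Lipschitz estimates do not degrade as $j$ grows. The remaining ingredients — Taylor expansion of the quadrature defects, $L^\infty$ bounds on the integrands via the a priori estimate $0\leq c_n^j\leq 1$, and the Gronwall closure — are standard.
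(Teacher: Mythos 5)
The paper itself omits the proof of this theorem, stating only that it is ``highly technical,'' deferring details to \cite{Mario_Pezzella}, and describing it as a combination of the arguments in \cite[Section 3]{Garey1972} and \cite[Section 3.1]{MPV_2024}. Your outline is precisely that combination --- Garey-style predictor--corrector consistency (the first-order predictor defect reaching $c_{n+1}^j$ only through a factor of $\Delta t$, hence contributing $O(\Delta t(\Delta t+\Delta z+\Delta\lambda))$) together with the exponential-reformulation stability estimates and discrete Gronwall closure of \cite{MPV_2024} --- and the supporting ingredients (global Lipschitz bound on $F$, the inequality $|e^{-a}-e^{-b}|\leq|a-b|$ for $a,b\geq 0$, and the a priori bound $0\leq c_n^j\leq 1$) are all correct, so your proposal matches the paper's intended approach.
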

The theoretical aspects of Theorem \ref{thm:convergence}, as well as alternative approaches for the simulation of model \eqref{eq:5}, will be further investigated and clarified in \cite{Mario_Pezzella}.

In order to experimentally show the convergence property stated with Theorem \ref{thm:convergence}, we address equations \eqref{eq:Continuous_model:ADIM}-\eqref{eq:Adimensional_I} with
\begin{equation}\label{eq:toy_problem_convergence}
    \begin{array}{l}
         \xi=1, \quad \ w(z)=1-z, \\
         \mu=1, \quad \ I(\lambda)=\exp\{\lambda\}-1,\\
    \end{array} \;\; \varepsilon_\nu (\lambda)=\frac{\lambda-0.9}{1.2-\lambda}, \quad \varepsilon_g(\lambda)=1-\lambda+2\lambda^2-\frac{4}{5}\lambda^3,
\end{equation}
where the chosen functions and parameters do not possess any specific physical meaning. We integrate the problem \eqref{eq:toy_problem_convergence} with the PC scheme \eqref{eq:PC method ADIM} and compute the mean space-time error $E_{c}$ and the experimental order of convergence $\rho_c$ as follows
    \begin{equation}\label{eq:Error}
        E_{c}(\Delta z, \Delta t; \Delta \lambda)=\Delta z\Delta t\sum_{j=0}^{N_z}\sum_{n=0}^{N_t} \left|c_n^j-C_n^j\right|, \qquad \rho_c=\log_2\left(\dfrac{E_{c}(\Delta z, \Delta t; \Delta \lambda)}{E_{c}(\frac{\Delta z}{2}, \frac{\Delta t}{2}; \frac{\Delta \lambda}{2})}\right).
    \end{equation}
Here, the reference solution $C_n^j$ is computed by the same method with $\Delta z=\Delta t=\Delta \lambda=2^{-12}$. Figure \ref{fig:Exp_Convergence} and Table \ref{tab:Exp_Convergence} report the simulations outcomes for different values of the stepsizes and confirm the second order convergence of the PC discretization \eqref{eq:PC method ADIM}. Furthermore, in order to assess the efficiency of the correction process, we compare the PC numerical solution with the one obtained by 
\begin{equation}\label{eq:solo_predictor}
     \pi_{n+1}^j \! =\pi_n^j \displaystyle \exp \! \left\{ \! -\Delta t \Delta\lambda  \xi w(z_j) \! \!  \sum_{l=0}^{N_{\lambda}-1} \!\! \! F \! \left( \! I(\lambda_l)\exp \! \left\{ \! - \mu\Delta z \! \left( \varepsilon_\nu (\lambda_l)\sum_{r=0}^{j-1}\pi_n^r + j\varepsilon_g(\lambda_l)  \! \right) \! \right\} \! \right) \! \right\} \! ,
\end{equation}
which corresponds to the positive and monotone first order scheme used in \eqref{eq:PC method ADIM} as the Predictor (P). The work precision diagram in Figure \ref{fig:Exp_Convergence} shows the mean errors with respect to the computational efforts and confirms the superior performances of \eqref{eq:PC method ADIM} compared to \eqref{eq:solo_predictor}. Remarkably, the PC method achieves an accuracy at least two orders of magnitude higher within a fixed execution time. 

\begin{table}[htbp]
\small
\caption{Experimental convergence of the P and the PC methods.}\label{tab:Exp_Convergence}
\begin{center}
  \begin{tabular}{c|cc|cc} 
   \multicolumn{1}{c|}{\bf Stepsizes} & \multicolumn{2}{c|}{\bf Mean Space-time Errors} & \multicolumn{2}{c}{\bf Exp. order} \\ 
    $\Delta x=\Delta t=\Delta \lambda$ & \bf $E_\pi$ & \bf $E_c$ & \bf $\rho_{\pi}$ & \bf $\rho_c$ \\ 
    \hline \rule{-1.5pt}{9pt}
    $\!2^{-3\phantom{1}}$& $1.07\cdot 10^{-2}$  & $5.08\cdot 10^{-4}$ & $-$    & $-$    \\
    $2^{-4\phantom{1}}$  & $5.28\cdot 10^{-3}$  & $1.29\cdot 10^{-4}$ & $1.02$ & $1.98$ \\ 
    $2^{-5\phantom{1}}$  & $2.27\cdot 10^{-3}$  & $3.24\cdot 10^{-5}$ & $1.01$ & $1.99$ \\ 
    $2^{-6\phantom{1}}$  & $1.31\cdot 10^{-3}$  & $8.11\cdot 10^{-6}$ & $1.00$ & $2.00$ \\ 
    $2^{-7\phantom{1}}$  & $6.54\cdot 10^{-4}$  & $2.03\cdot 10^{-6}$ & $1.00$ & $2.00$ \\ 
    $2^{-8\phantom{1}}$  & $3.27\cdot 10^{-4}$  & $5.06\cdot 10^{-7}$ & $1.00$ & $2.00$ \\ 
    $2^{-9\phantom{1}}$  & $1.63\cdot 10^{-4}$  & $1.25\cdot 10^{-7}$ & $1.00$ & $2.02$ \\ 
    $2^{-10}$ & $8.16\cdot 10^{-5}$  & $2.98\cdot 10^{-8}$ & $1.00$ & $2.07$ \\ 
    $2^{-11}$ & $4.08\cdot 10^{-5}$  & $5.95\cdot 10^{-9}$ & $1.00$ & $2.32$ \\ 
    \hline
  \end{tabular}
\end{center}
\end{table}

\begin{figure}[htbp]
  \centering
  \label{fig:Exp_Convergence}\includegraphics[scale=0.70]{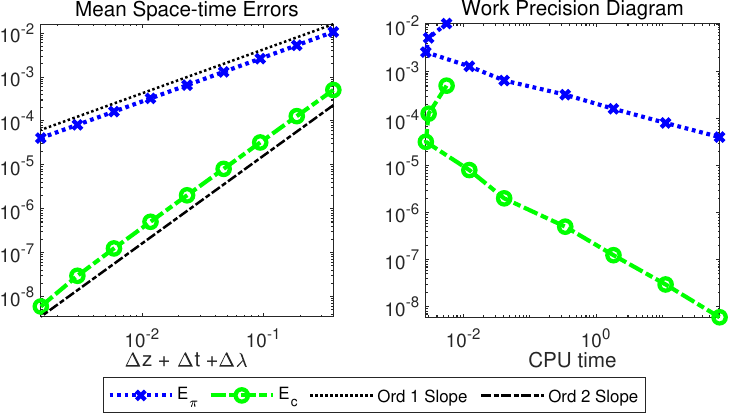}
  \caption{Experimental convergence and performances of the P and PC methods.}
\end{figure}

\section{Numerical Simulations}\label{sec:Simulations}
In this section we present several experiments under varying scenarios to provide insights into the qualitative behavior of the developed mathematical model. The dimensionless formulation \eqref{eq:Continuous_model:ADIM} is here simulated via the PC method \eqref{eq:PC method ADIM}, choosing the stepsizes $\Delta z=\Delta t=\Delta \lambda=5\cdot 10^{-4}$. The dimensional numerical solution is then obtained by reversing the transformations in \eqref{eq:transformations}. 

\vspace{0.5\baselineskip}
\textbf{Test 1: the BCT}\label{defn: BCT} - For our first experiment, hereafter referred to as the \textit{Base Case Test} (BCT), we assume that the light hits the paint surface at $z=0$ and set the parameters as detailed in Table \ref{tab:Parameters_Base_Case}.

The function $I(\lambda),$ shown in Figure \ref{fig:Irrad}, is derived by a local quadratic regression of the emission data for the UV-filtered xenon lamp reported in \cite[Fig. 1]{Monico_Dati_per_I}. The molar absorptivity $\varepsilon_c(\lambda)$ of cadmium sulfide is obtained from the hex-$CdS$ diffuse reflectance UV-Vis spectra in \cite[Supporting Information, Sec. 2.1]{monico2018role}. Specifically, given the reflectance $R(\lambda)$ at a relative humidity level of $95\%$ we assume a zero transmittance and, following the arguments in \cite{gobrecht2015combining}, we compute 
\begin{equation}\label{eq:e_c Values}
    \varepsilon_c(\lambda)=-\dfrac{\log\left(R(\lambda)\right)}{c^{ref} \cdot L_c}\dfrac{1}{\varepsilon_c^{ref}}, \qquad \qquad\lambda\in\Omega,
\end{equation}
where $L_c=10^{-4} \ \text{cm}$ represents the thickness of the cadmium sulfate top crust. As for the $CdSO_4$, in absence of experimental data, we assume a proportionality between the two molar absorptivities and take $\varepsilon_g(\lambda)=\nu^{-1}\varepsilon_c(\lambda),$ with $\nu$ defined in \eqref{eqref:parametri_ADIM} (cf. Figure \ref{fig:Eps} for the plot of the \hyperref[defn: BCT]{BCT}). Moreover, the infiltration of water is here characterized by the following depth-dependent profile
\begin{equation}\label{eq:acqua_livello}
    w(z)=\left(1-\dfrac{w_b}{w^{ref}}\right)(1-z), \qquad \qquad \qquad z\in\Omega,
\end{equation}
where $w_b$ denotes the lowest level of humidity below which the reaction does not occur. 

\begin{table}[htbp]
\small
\caption{Parameters of the \hyperref[defn: BCT]{BCT} simulation.}\label{tab:Parameters_Base_Case}
\begin{center}
    \begin{tabular}{c|c|c|c}
        \textbf{Params.} & \textbf{Descriptions} & \textbf{Units} & \textbf{Values} \\
        \hline \rule{-3pt}{9pt}
        $L$          &  Depth of painted layer      & cm & $7.00 \cdot 10^{-3\phantom{1}}$           \\
        $\mathsf{T}$ & Reference time       & s  & $2.30 \cdot 10^{6\phantom{-1}}$  \\
        $\lambda_m$  & Minimum wavelength   & nm & $3.80 \cdot 10^{2\phantom{-1}}$  \\
        $\lambda_M$  & Band-gap wavelength  & nm & $5.12 \cdot 10^{2\phantom{-1}} $ \\ 
        $A$  & Arrhenius pre-exponential factor & $\text{cm}^2 \ \text{mol}^{-1} \text{s}^{-1}$ & $1.00 \cdot 10^{8\phantom{-1}} $ \\
        $E_a$ & Activation energy & $\text{J} \ \text{mol}^{-1}$ & $7.78 \cdot 10^{-19} $ \\
         $R$  & Gas constant & $\text{J} \ \text{K}^{-1} \ \text{mol}^{-1}$  & $8.31 \cdot 10^{0\phantom{-1}} $ \\
        $T$  & Temperature & $\text{K}$ & $2.98 \cdot 10^{2\phantom{-1}} $ \\
        $c^{ref}$  & Reference $CdS$ concentration   & $\text{mol} \ \text{cm}^{-3}$ & $3.34 \cdot 10^{-2\phantom{1}}$  \\
        $w^{ref}$  & Humidity reference value   & $\text{mol} \ \text{cm}^{-3}$ & $1.22 \cdot 10^{-6\phantom{1}}$ \\
        $w_b$  & Humidity lower threshold   & $\text{mol} \ \text{cm}^{-3}$ & $5.77 \cdot 10^{-7\phantom{1}}$ \\
        $\varepsilon_c^{ref}$  &  $CdS$ molar absorptivity reference value   & $\text{cm}^{2} \ \text{mol}^{-1}$ & $1.64 \cdot 10^{5\phantom{-1}}$ \\
        $\varepsilon_g^{ref}$  & $CdSO_4$ molar absorptivity reference value &  $\text{cm}^{2} \ \text{mol}^{-1}$ & $6.56 \cdot 10^{5\phantom{-1}}$ \\
        $I^{ref}$  & Irradiance reference value &  $\text{W}^{2} \ \text{m}^{-2} \ \text{nm}^{-1}$ & $3.48 \cdot 10^{0 \phantom{-1}}$ \\ 
        \hline 
    \end{tabular} 
    \vspace{17pt}
    \begin{tabular}{cccc}
        \multicolumn{4}{l}{\bf Dimensionless Parameters $ \qquad \mu=1.53\cdot 10^2, \quad \nu=2.50\cdot 10^{-1}, \quad \ \xi=3.70\cdot 10^3$} 
    \end{tabular}
\end{center}
\end{table}
\begin{figure}[htbp]
  \centering
  \begin{minipage}{0.45\textwidth}
    \centering
    \includegraphics[width=.9\linewidth]{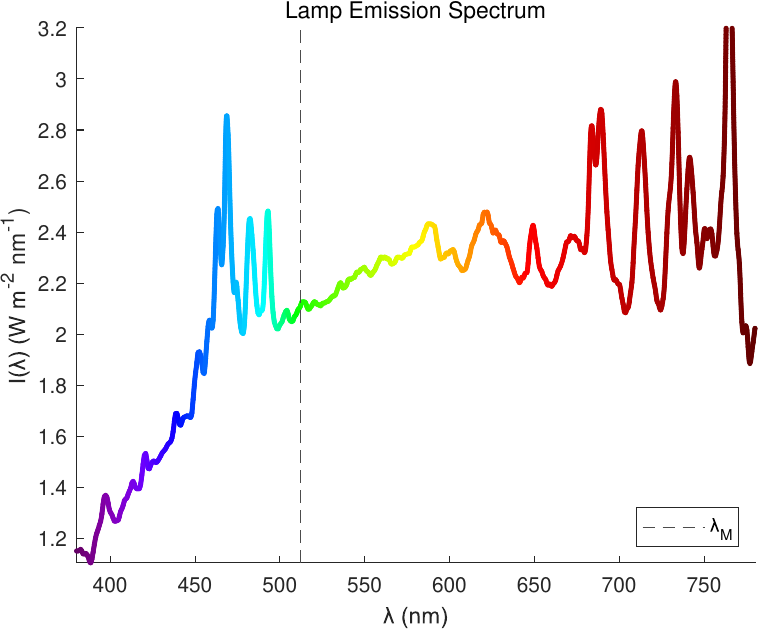}
    \caption{Absolute irradiance as a function of the wavelength for the UV-filtered xenon lamp.}
    \label{fig:Irrad}
  \end{minipage}\hspace{0.05\textwidth} 
  \begin{minipage}{0.45\textwidth}
    \centering
    \includegraphics[width=.9\linewidth]{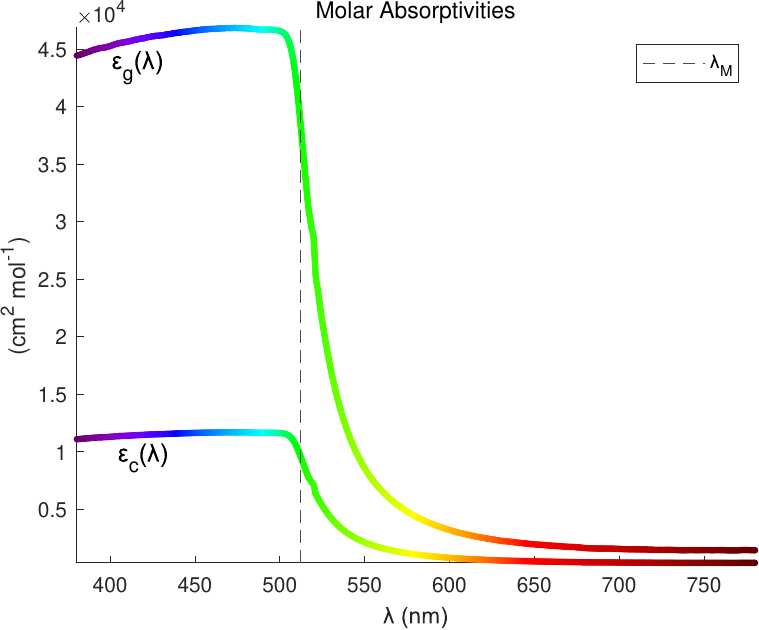}
    \caption{Molar absorptivities as functions of the wavelength for the \hyperref[defn: BCT]{BCT} $(\varepsilon_g(\lambda)=4\varepsilon_c(\lambda))$.}
    \label{fig:Eps}
  \end{minipage}
\end{figure}

The PC numerical simulations of the \hyperref[defn: BCT]{BCT} in Figures \ref{fig:BCT_2D} and \ref{fig:BCT} demonstrate that the photochemical degradation over time predominantly affects the surface, leading to significant variations in the concentrations of $CdS$ and $CdSO_4$ within the shallow regions (cf. Figures \ref{fig:BCT_2D_Zoom} and \ref{fig:BCT_Zoom}). The emergence of an S-shaped pattern in Figures \ref{fig:BCT} and \ref{fig:BCT_Zoom} highlights the formation of cadmium sulfate on the surface and its gradual penetration deeper into the layers. A passivation effect induced by the $CdSO_4$ is evident as well.  Indeed, higher concentrations of cadmium sulfate, which is white and highly reflective, visibly retard the overall reaction process. 

\begin{figure}[htbp]
  \centering
  \begin{minipage}{0.45\textwidth}
    \centering
    \includegraphics[width=.9\linewidth]{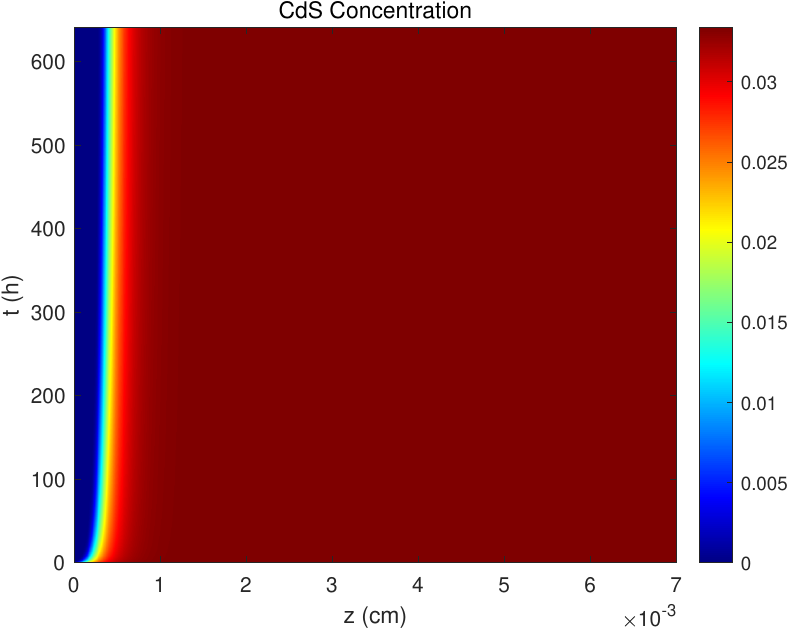}
    \caption{\hyperref[defn: BCT]{BCT}: Space-time evolution of the cadmium sulfide concentration}
    \label{fig:BCT_2D}
  \end{minipage} \hspace{0.05\textwidth} 
  \begin{minipage}{0.45\textwidth}
    \centering
    \includegraphics[width=.9\linewidth]{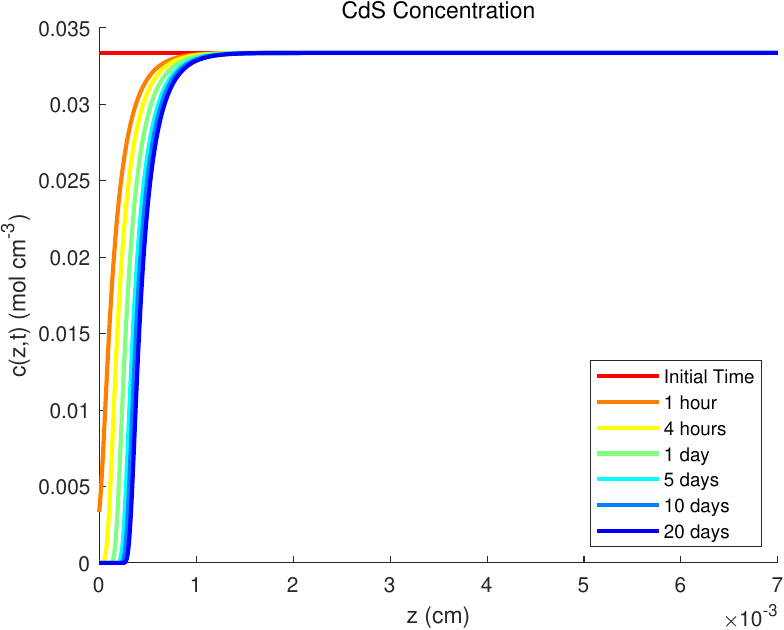}
    \caption{\hyperref[defn: BCT]{BCT}: Space evolution of the cadmium sulfide concentration at different times}
    \label{fig:BCT}
  \end{minipage}
\end{figure}
\begin{figure}[htbp]
  \centering
  \begin{minipage}{0.45\textwidth}
    \centering
    \includegraphics[width=.9\linewidth]{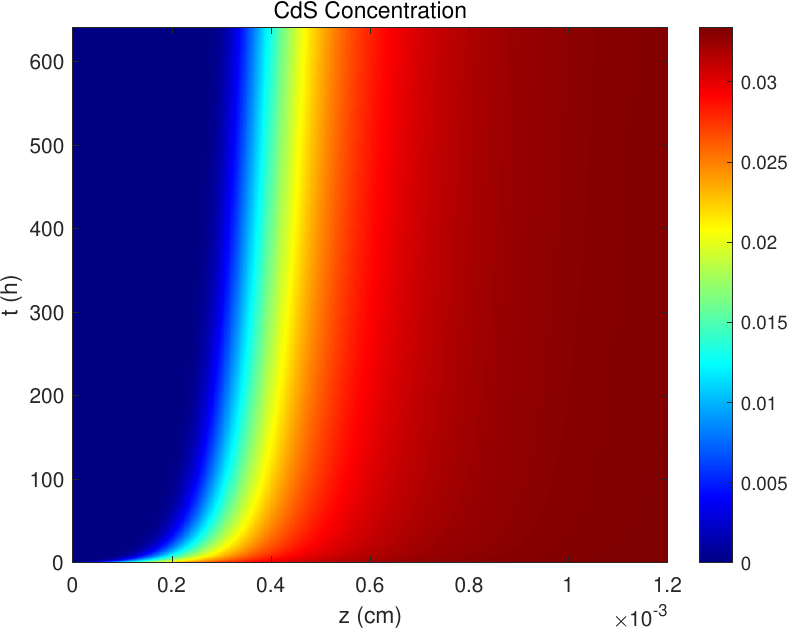}
    \caption{\hyperref[defn: BCT]{BCT}: Space-time evolution, near the surface, of the cadmium sulfide concentration.}
    \label{fig:BCT_2D_Zoom}
  \end{minipage} \hspace{0.05\textwidth} 
  \begin{minipage}{0.45\textwidth}
    \centering
    \includegraphics[width=.9\linewidth]{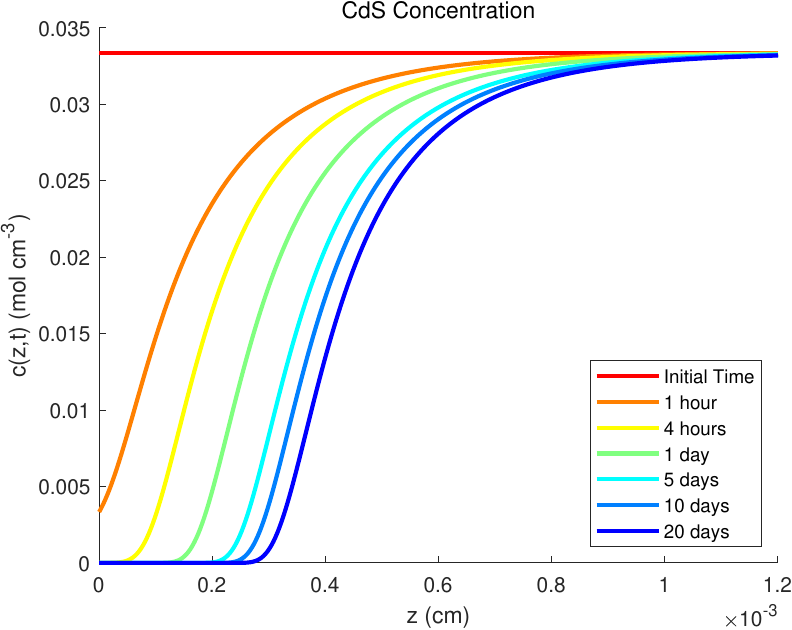}
    \caption{\hyperref[defn: BCT]{BCT}: Space evolution, near the surface, of the cadmium sulfide concentration for different times.}
    \label{fig:BCT_Zoom}
  \end{minipage}
\end{figure}

To quantify the passivation effect, given a threshold value $\psi \in \mathring{\Omega}$, we analyze the behavior of the dimensionless separation front 
\begin{equation}\label{eq:Separation_Front}
    \Sigma_\psi(t)=\min\{z\in \Omega \ : \ c(z,t)\geq \psi \}, \qquad t\in \Omega,
\end{equation}
that represents the spatial point from which, at time $t,$ the $CdSO_4$ concentration goes below the threshold $1-\psi$. Figure \ref{fig:BC_Sep_Front} illustrates the approximated values $\Sigma_\psi^n\approx\Sigma_\psi(t_n)$ derived from the PC numerical solution when $\psi\in\{0.7,0.8,0.9\}.$ These plots indicate that the separation front progresses linearly respectively to the logarithm of time, whatever the value of $\psi.$ Therefore, we identify $\Sigma_\psi(t)$ in \eqref{eq:Separation_Front} with the following function 
\begin{equation*}
    S_\psi(t)=a(\psi)+b(\psi)\log(t),
\end{equation*}
where the threshold-dependent coefficients are determined by a fitting procedure minimizing the mismatch with the $\Sigma_\psi^n$ values (see Table \ref{tab:Parameters_Log_Sig} and Figure \ref{fig:BC_Log_Sep_Front} for the outcomes). It then follows that $\frac{d^2}{dt^2}S_\psi(t) \! \propto \! -\frac{1}{t^2}$ represents an estimate, as time goes by, of the reaction speed reduction due to passivation. 

\begin{table}[htbp]
\footnotesize
\caption{Fitted parameters of the separation front logarithmic representation for the \hyperref[defn: BCT]{BCT} simulation.}\label{tab:Parameters_Log_Sig}
\begin{center}
    \begin{tabular}{c|c c|c}
        \textbf{Threshold} & \multicolumn{2}{c|}{\textbf{Fitted Parameters}} & \textbf{Mean Fitting} \\
        \textbf{Value} $\psi$ & $a(\psi)$ &  $b(\psi)$ & \textbf{Error on} $\{\Sigma^n_\psi\}$\\
        \hline \rule{-3pt}{9pt}
        $0.9$  &     $6.53 \cdot 10^{-3}$ & $9.69 \cdot 10^{-2}$ & $9.24 \cdot 10^{-7}$ \\
        $0.8$ & $6.57 \cdot 10^{-3}$ & $8.09 \cdot 10^{-2}$ & $9.21 \cdot 10^{-7}$  \\
        $0.7$ & $6.60 \cdot 10^{-3}$ & $7.20 \cdot 10^{-2}$ & $9.15 \cdot 10^{-7}$ \\
        \hline 
    \end{tabular} 
\end{center}
\end{table}

\begin{figure}[htbp]
  \centering
  \begin{minipage}{0.45\textwidth}
    \centering
    \includegraphics[width=.9\linewidth]{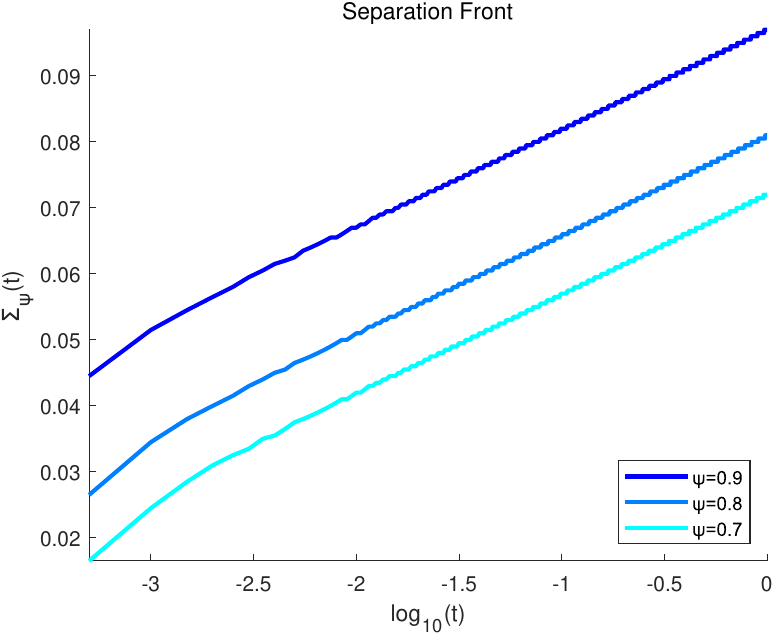}
    \caption{\hyperref[defn: BCT]{BCT}: Separation front evolution with respect to the logarithm of time.}
    \label{fig:BC_Sep_Front}
  \end{minipage} \hspace{0.05\textwidth} 
  \begin{minipage}{0.45\textwidth}
    \centering
    \includegraphics[width=.9\linewidth]{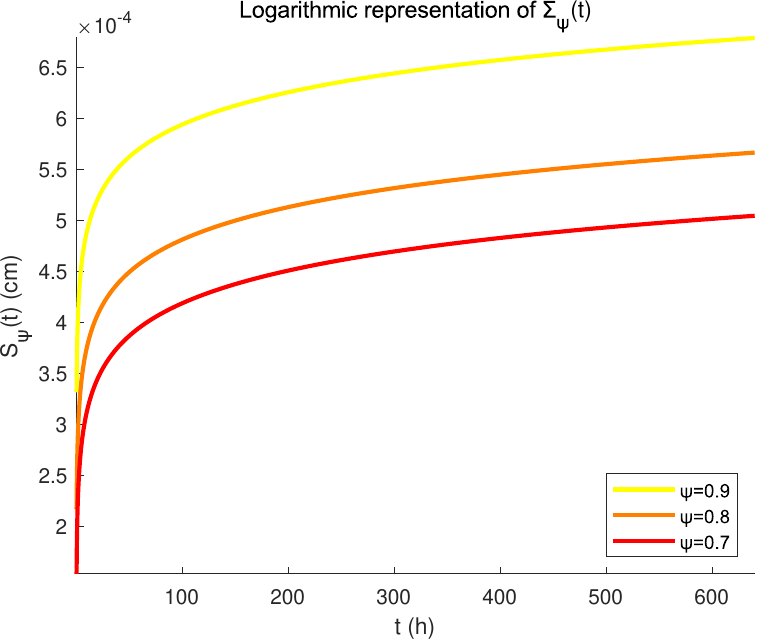}
    \caption{\hyperref[defn: BCT]{BCT}: Fitted logarithmic representation of the separation front}
    \label{fig:BC_Log_Sep_Front}
  \end{minipage}
\end{figure}

\vspace{0.5\baselineskip}
\textbf{Test 2: the APT}\label{defn: APT} - In all simulations, the function $\varepsilon_c(\lambda)$ is derived from data as specified in \eqref{eq:e_c Values}. However, due to the lack of experimental data for cadmium sulfate, we assume that $\varepsilon_g(\lambda) = \nu^{-1}\varepsilon_c(\lambda).$ To investigate how this assumption on molar absorptivities affects the degradation kinetics, we perform an \textit{Absorptivity Proportionality Test} (APT).

We explore various choices for the proportionality factor $\nu$, reported in Table \ref{tab:Parameters_Base_Case}, while maintaining the other parameters consistent with the \hyperref[defn: BCT]{BCT}. Figure \ref{fig:APT_Final} reports a comparison of the final $CdS$ concentrations corresponding to the different values of $\nu.$ The spatial variation, with respect to the \hyperref[defn: BCT]{BCT}, is reported in Figure \ref{fig:APT_Difference}. Numerical simulations clearly indicate that as $\varepsilon_g(\lambda)$ increases relatively to $\varepsilon_c(\lambda)$, the degradation effect on $CdS$ decreases. These findings align with expectations, as higher molar absorptivity of cadmium sulfate, which tends to accumulate on the surface, leads to greater attenuation of light penetration and its associated effects. 

\begin{table}[htbp]
\footnotesize
\caption{Parameters of the \hyperref[defn: APT]{APT} simulation.}\label{tab:Parameters_APT}
\begin{center}
    \begin{tabular}{c|c|c|c|c|c|c}
        \textbf{Parameters} & \multicolumn{5}{c|}{\textbf{Values}}  &  \textbf{Units} \\
        \hline \rule{-3pt}{9pt}
        $\varepsilon_c^{ref}$  &     $1.64 \cdot 10^{5}$ &     $1.64 \cdot 10^{5}$ &  $1.64 \cdot 10^{5}$ &  $1.64 \cdot 10^{5}$ &  $1.64 \cdot 10^{5}$ &  $\text{cm}^{2} \ \text{mol}^{-1}$ \\
        $\varepsilon_g^{ref}$ & $1.64 \cdot 10^{5}$ & $3.28 \cdot 10^{5}$ &  $6.56 \cdot 10^{5}$ &  $1.32 \cdot 10^{6}$ &  $2.64 \cdot 10^{6}$ &  $\text{cm}^{2} \ \text{mol}^{-1}$ \\
        $\nu$  & $1$ & $1/2$ &  $1/4$ &  $1/8$ &  $1/16$ &  $-$ \\
        \hline 
    \end{tabular} 
\end{center}
\end{table}

\begin{figure}[htbp]
  \centering
  \begin{minipage}{0.45\textwidth}
    \centering
    \includegraphics[width=.9\linewidth]{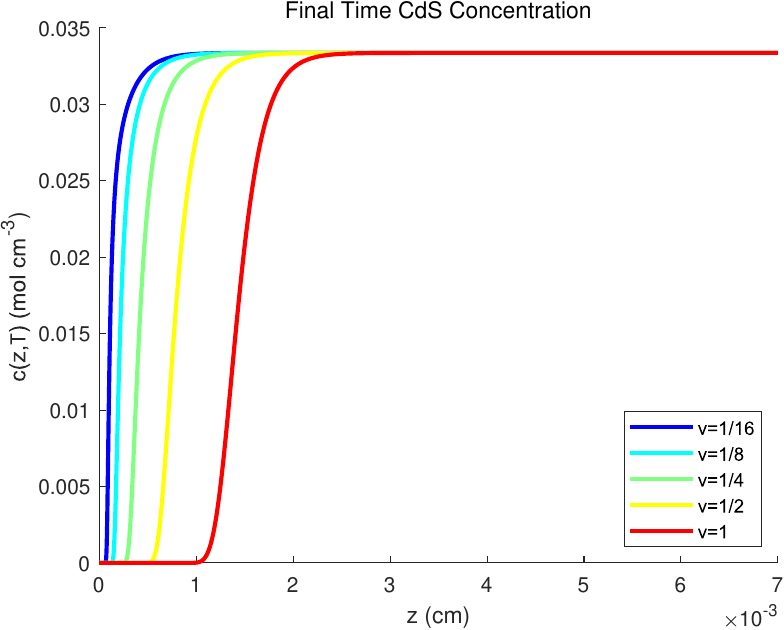}
    \caption{\hyperref[defn: APT]{APT}: Space evolution, after $20$ days, of the cadmium sulfide concentration for different values of $\nu$.}
    \label{fig:APT_Final}
  \end{minipage} \hspace{0.05\textwidth} 
  \begin{minipage}{0.45\textwidth}
    \centering
    \includegraphics[width=.9\linewidth]{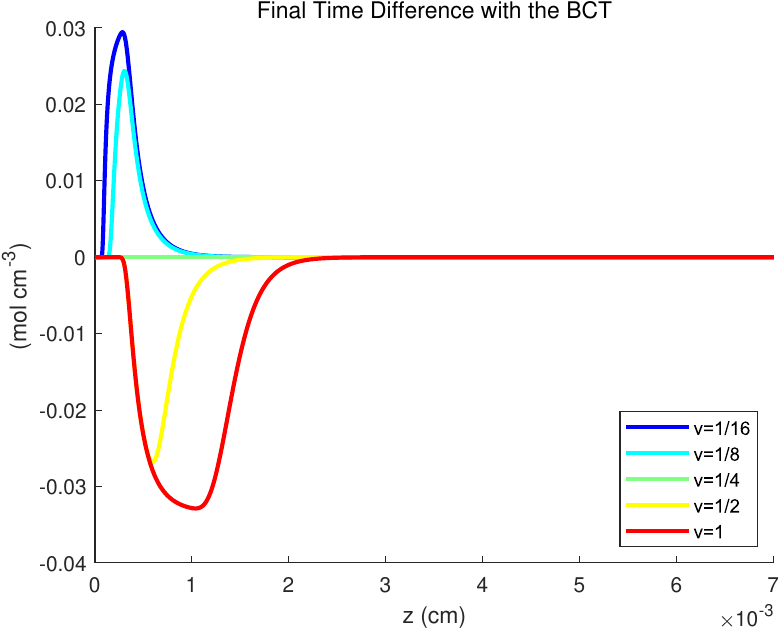}
    \caption{\hyperref[defn: APT]{APT}: Variation of the CdS final concentration with respect to the \hyperref[defn: BCT]{BCT}, \textit{i.e.} the case of $\nu=1/4.$}
    \label{fig:APT_Difference}
  \end{minipage}
\end{figure}

\vspace{0.5\baselineskip}
\textbf{Test 3: the UVT}\label{defn: UVT} - Until now, we have excluded ultraviolet (UV) radiation from our analysis based on consultations with practitioners, who noted that indoor lighting effectively filters out this component. However, given the potential to include paintings exposed outdoors, we conduct an experiment that incorporates these previously omitted wavelengths. Specifically, for our \textit{Ultraviolet Test} (UVT), we consider $\lambda_m=200 \ \text{nm}$ and employ the functions plotted in Figures \ref{fig:Irrad_UV} and \ref{fig:Eps_UV}, which are derived from the data in \cite{monico2018role,Monico_Dati_per_I}. The remaining parameters are equal to those defined in the \hyperref[defn: BCT]{BCT}.

\begin{figure}[htbp]
  \centering
  \begin{minipage}{0.45\textwidth}
    \centering
    \includegraphics[width=.9\linewidth]{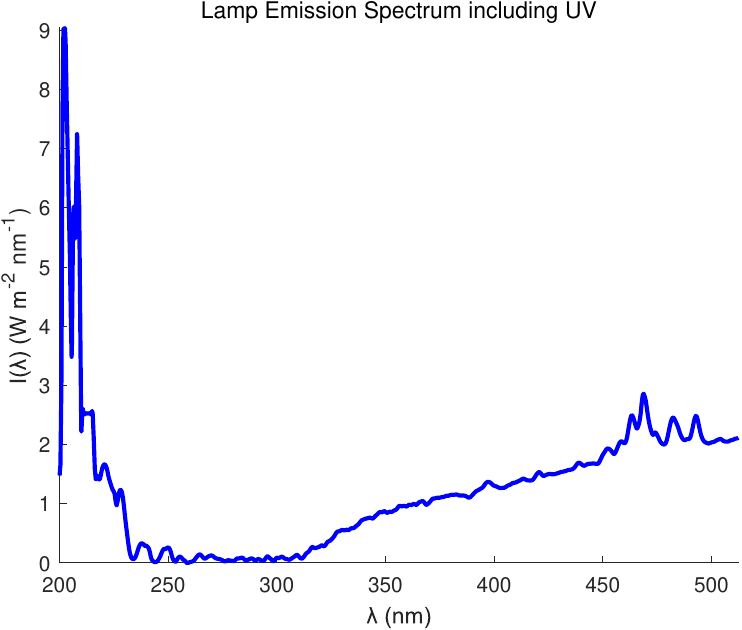}
    \caption{\hyperref[defn: UVT]{UVT}: Absolute irradiance as a function of the wavelength for the xenon lamp, including UV radiation.}
    \label{fig:Irrad_UV}
  \end{minipage}\hspace{0.05\textwidth} 
  \begin{minipage}{0.45\textwidth}
    \centering
    \includegraphics[width=.9\linewidth]{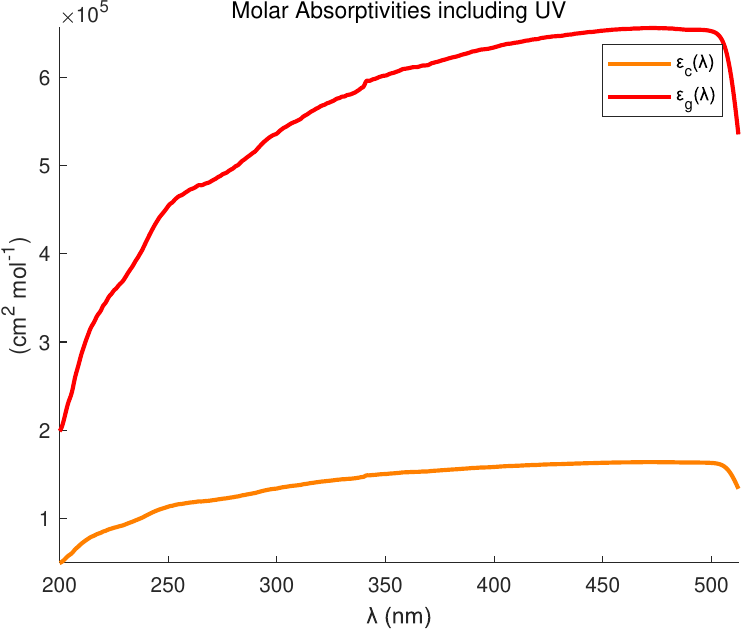}
    \caption{\hyperref[defn: UVT]{UVT}: Molar absorptivities as functions of the wavelength, including UV radiation.}
    \label{fig:Eps_UV}
  \end{minipage}
\end{figure}

The outcomes of the PC numerical simulations of the \hyperref[defn: UVT]{UVT} are shown in Figures \ref{fig:UVT_2D} and \ref{fig:UVT}. It is evident that, compared to the \hyperref[defn: BCT]{BCT}, the reaction is slower at the early stages but then the degradation becomes more pronounced and penetrates deeper into the painting (see also Figures  \ref{fig:UVT_2D_Zoom} and \ref{fig:UVT_Zoom}). To quantify this effect, we introduce the cumulative concentration of CdS at time $t,$ defined as follows
\begin{equation}\label{eq:C(t)}
    C_L^n= \Delta z  \sum_{j=0}^{N_z}c_j^n \approx \int_0^L c(\zeta,t_n) \ d \zeta, \qquad \qquad n=0,\dots, N_t,
\end{equation}
whose evolution is represented in Figures \ref{fig:UVT_Changes} and \ref{fig:UVT_Changes_All}. Therefore, considering the impact of UV light, we observe a final increase of $3.5\%$ in the overall deterioration process, compared to the base case. These findings are consistent with real-world observations, as increased deterioration is documented in artworks exposed to outdoor environments with natural light.

\begin{figure}[htbp]
  \centering
  \begin{minipage}{0.45\textwidth}
    \centering
    \includegraphics[width=.9\linewidth]{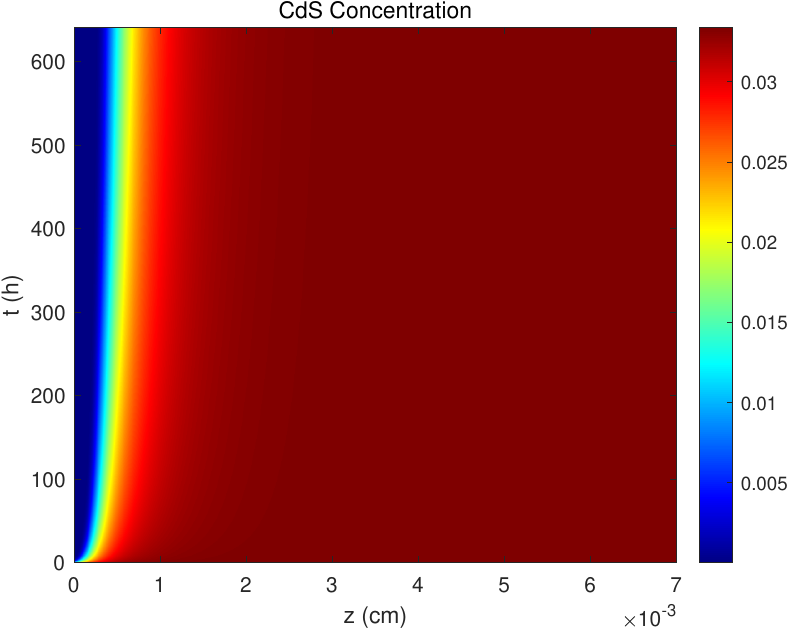}
    \caption{\hyperref[defn: UVT]{UVT}: Space-time evolution of the cadmium sulfide concentration}
    \label{fig:UVT_2D}
  \end{minipage} \hspace{0.05\textwidth} 
  \begin{minipage}{0.45\textwidth}
    \centering
    \includegraphics[width=.9\linewidth]{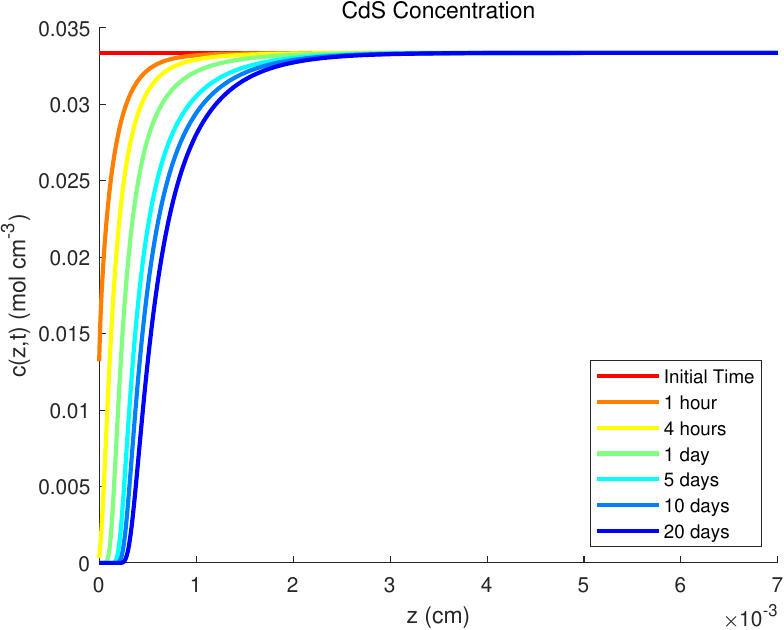}
    \caption{\hyperref[defn: UVT]{UVT}: Space evolution of the cadmium sulfide concentration at different times}
    \label{fig:UVT}
  \end{minipage}
\end{figure}

\begin{figure}[htbp]
  \centering
  \begin{minipage}{0.45\textwidth}
    \centering
    \includegraphics[width=.9\linewidth]{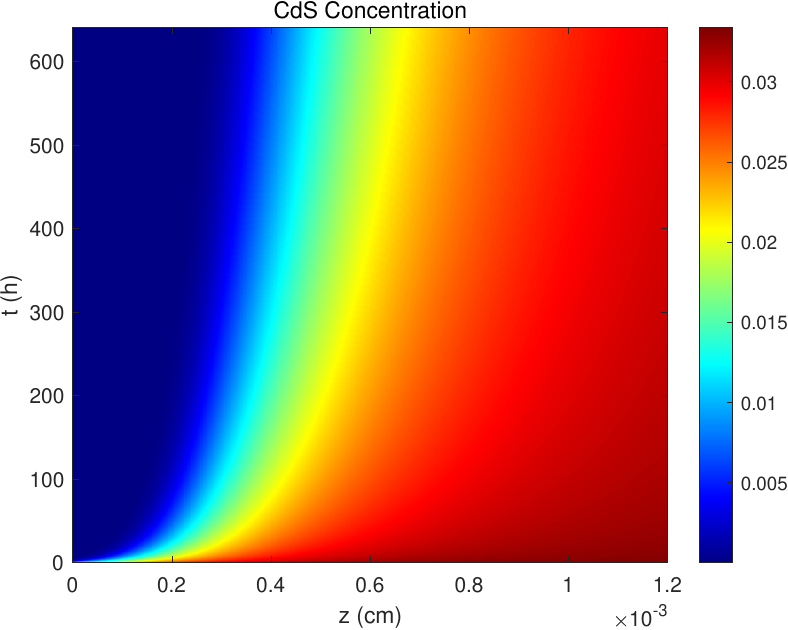}
    \caption{\hyperref[defn: UVT]{UVT}: Space-time evolution, near the surface, of the cadmium sulfide concentration.}
    \label{fig:UVT_2D_Zoom}
  \end{minipage} \hspace{0.05\textwidth} 
  \begin{minipage}{0.45\textwidth}
    \centering
    \includegraphics[width=.9\linewidth]{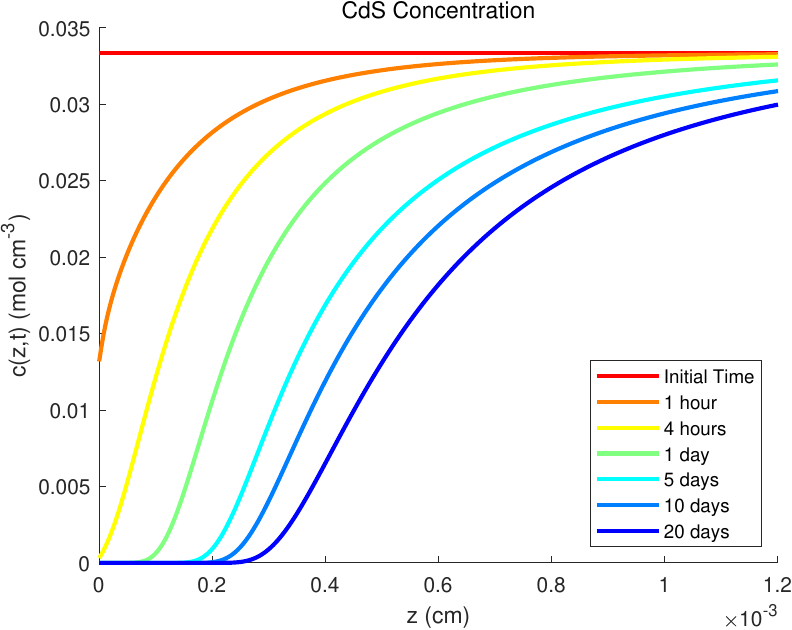}
    \caption{\hyperref[defn: UVT]{UVT}: Space evolution, near the surface, of the cadmium sulfide concentration for different times.}
    \label{fig:UVT_Zoom}
  \end{minipage}
\end{figure}
\begin{figure}[htbp]
  \centering
  \begin{minipage}{0.45\textwidth}
    \centering
    \includegraphics[width=.9\linewidth]{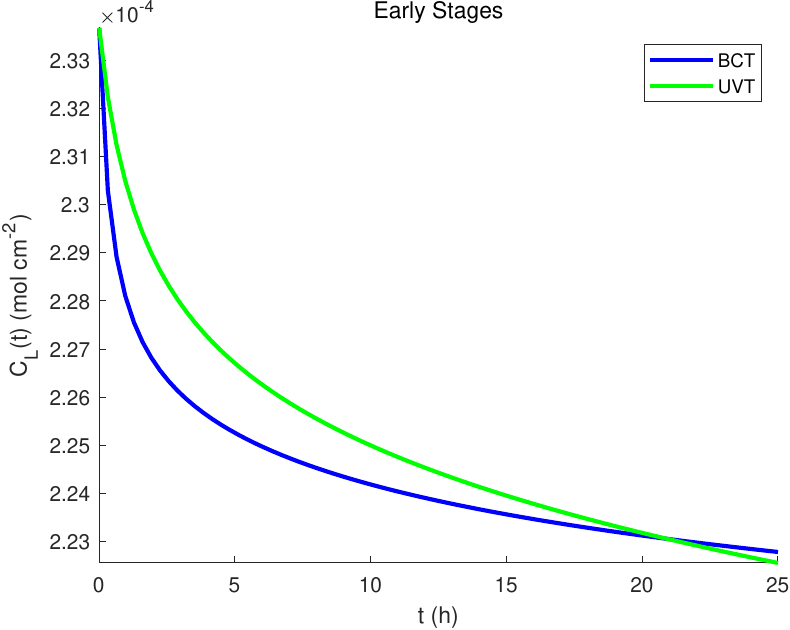}
    \caption{\hyperref[defn: UVT]{UVT}: Initial evolution of the cumulative cadmium sulfide concentration.}
    \label{fig:UVT_Changes}
  \end{minipage} \hspace{0.05\textwidth} 
  \begin{minipage}{0.45\textwidth}
    \centering
    \includegraphics[width=.9\linewidth]{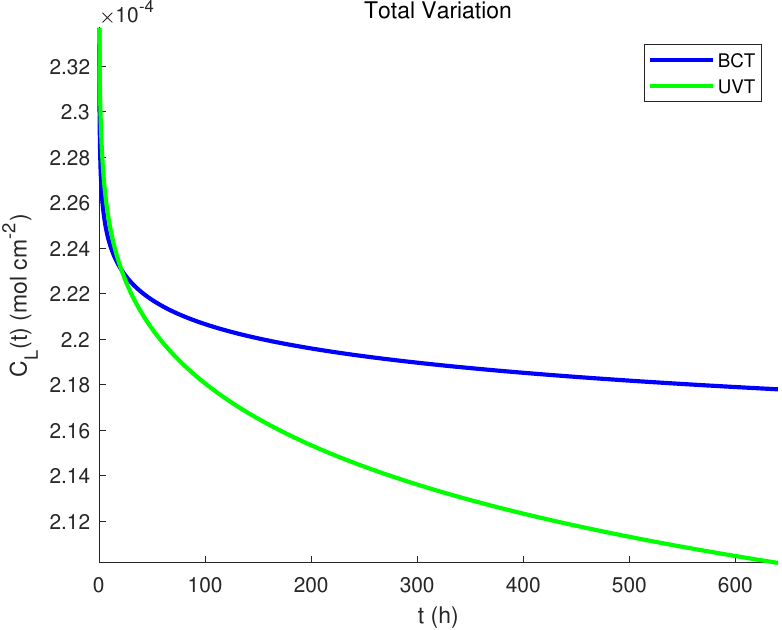}
    \caption{\hyperref[defn: UVT]{UVT}: Evolution of the cumulative cadmium sulfide concentration.}
    \label{fig:UVT_Changes_All}
  \end{minipage}
\end{figure}

\vspace{0.5\baselineskip}
\textbf{Test 4: the WPT}\label{defn: WPT} - We conduct a \textit{Water Penetration Test} (WPT) in order to assess the impact of humidity on the entire process. For the \hyperref[defn: BCT]{BCT} we assumed the water profile detailed in \eqref{eq:acqua_livello}, which decreases with increasing depth. Here, given the values of $w_b$ and $w^{ref}$ reported in Table \ref{tab:Parameters_Base_Case}, we consider the choices
\begin{equation*}
    w_s(z)=\chi_{[0,L_s]}(z) \left(1-\dfrac{w_b}{w^{ref}}\right)(1-z)    \qquad  \text{and} \qquad w_c(z)= \left(1-\dfrac{w_b}{w^{ref}}\right),
\end{equation*}
where $\chi_{[0,L_s]}$ represents the indicator function of the interval $[0,L_s].$ From a physical point of view, the function $w_s(z)$ models a heterogeneous medium that is permeable only within a surface sheet of thickness $L_s=3 \mu \text{m}$, while it is completely impermeable elsewhere. Conversely, the constant function $w_c(z)$ represents a scenario in which the water content remains uniform throughout the entire painting.

The numerical solution presented in Figure \ref{fig:WPT_barrier_Diff} demonstrates that, when $w(z)=w_s(z),$  the reaction is strictly confined to regions where the water penetrates and no degradation occurs for $z>L_s.$ Furthermore, the plot in Figure \ref{fig:WPT_barrier_Diff}, which shows the difference with the numerical solution of the \hyperref[defn: BCT]{BCT}, confirms that for $z\leq L_s$ the reaction behaves exactly in the same way as in the base case. 

The PC simulations, when $w(z)=w_c(z),$ result in the plots of Figures  \ref{fig:WPT_Const} and \ref{fig:WPT_Const_Diff}. In this case, it is evident that the higher water level compared to the \hyperref[defn: BCT]{BCT} enhances the reaction rate, resulting in a lower $CdS$ concentration for the \hyperref[defn: WPT]{WPT}.

\begin{figure}[htbp]
  \centering
  \begin{minipage}{0.45\textwidth}
    \centering
    \includegraphics[width=.9\linewidth]{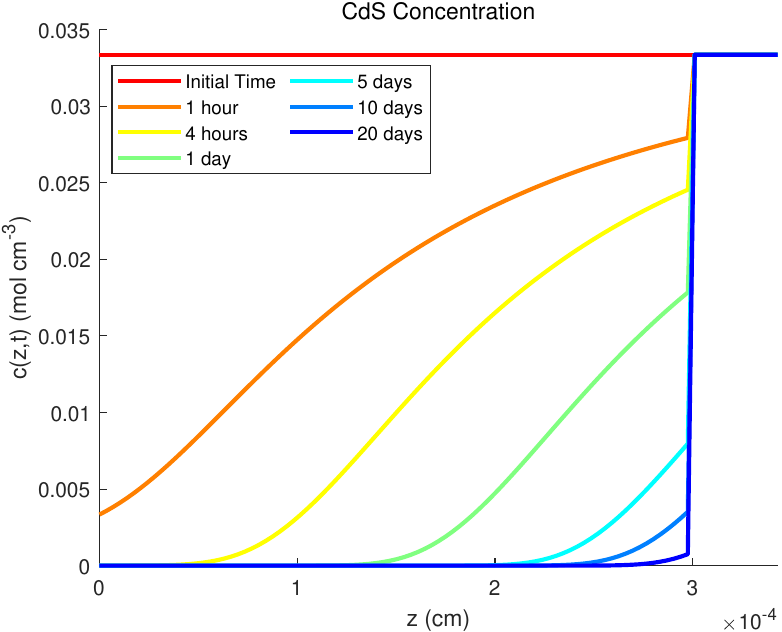}
    \caption{\hyperref[defn: WPT]{WPT}: Space evolution, near the superficial sheet, of the cadmium sulfide concentration. Here, $w(z)=w_s(z)$ and $L_s=3 \mu \text{m}$.}
    \label{fig:WPT_barrier}
  \end{minipage} \hspace{0.05\textwidth} 
  \begin{minipage}{0.45\textwidth}
    \centering
    \includegraphics[width=.9\linewidth]{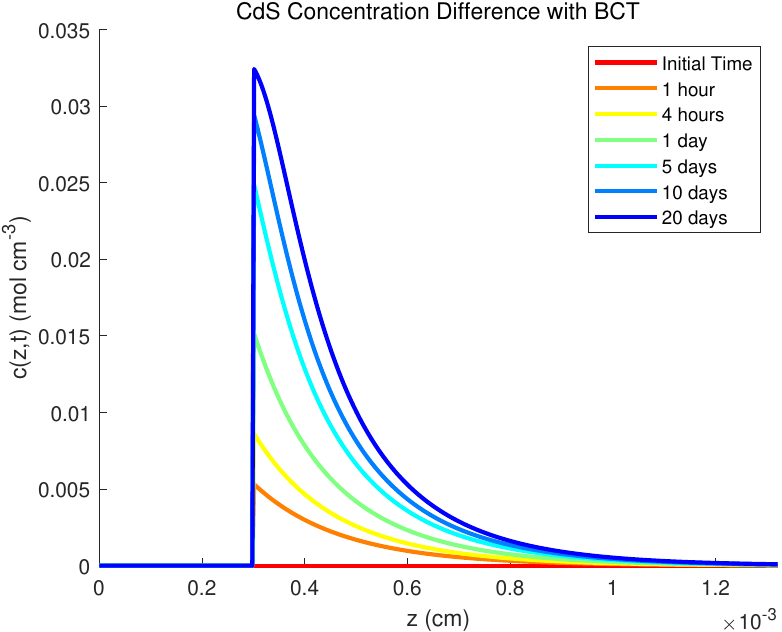}
    \caption{\hyperref[defn: WPT]{WPT}: Space evolution, near the surface, of the $CdS$ concentration difference with respect to the \hyperref[defn: BCT]{BCT}. Here, $w(z)=w_s(z)$ and $L_s=3 \mu \text{m}$.}
    \label{fig:WPT_barrier_Diff}
  \end{minipage}
\end{figure}
\begin{figure}[htbp]
  \centering
  \begin{minipage}{0.45\textwidth}
    \centering
    \includegraphics[width=.9\linewidth]{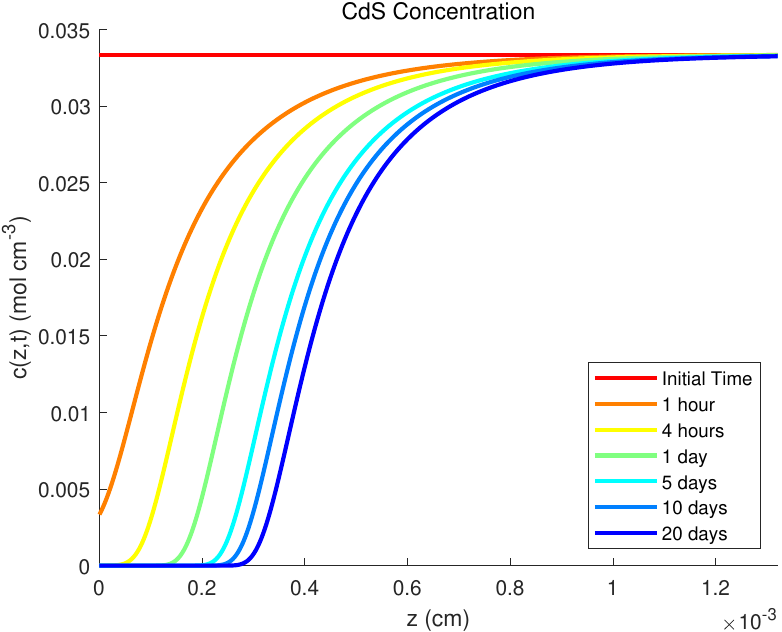}
    \caption{\hyperref[defn: WPT]{WPT}: Space evolution, near the surface, of the cadmium sulfide concentration. Here, $w(z)=w_c(z)$.}
    \label{fig:WPT_Const}
  \end{minipage} \hspace{0.05\textwidth} 
  \begin{minipage}{0.45\textwidth}
    \centering
    \includegraphics[width=.9\linewidth]{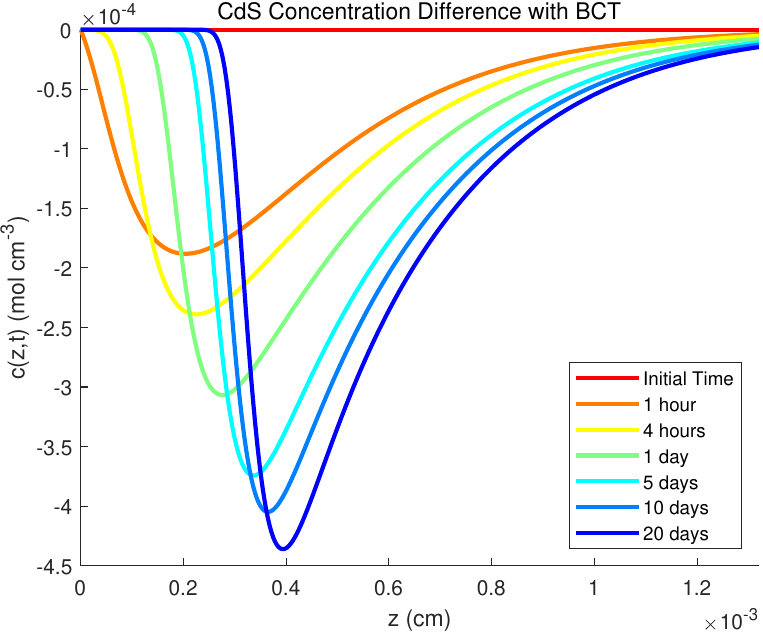}
    \caption{\hyperref[defn: WPT]{WPT}: Space evolution, near the surface, of the $CdS$ concentration with respect to the \hyperref[defn: BCT]{BCT}. Here, $w(z)=w_c(z)$.}
    \label{fig:WPT_Const_Diff}
  \end{minipage}
\end{figure}

\section{Sensitivity Analysis}\label{sec:Sensitivity}
Sensitivity Analysis (SA) assesses how variations in input parameters impact the model's output and identifies the most influential factors. Conducting the SA on a dimensionless model offers significant advantages, primarily due to the reduced number of input variables and the absence of dependencies on measurement units. In this section, we perform a comprehensive local SA of the model \eqref{eq:Continuous_model:ADIM}. Let $\nu_{0},\mu_{0}$ and $\xi_{0}$ denote the values of the dimensionless parameters corresponding to the \hyperref[defn: BCT]{BCT}, reported in Table \ref{tab:Parameters_Base_Case}. We define the relative overall $CdS$ concentration as follows
\begin{equation}\label{eq:Rel_overall_CdS}
    \mathcal{C}(\nu,\mu,\xi)=\dfrac{\Delta t \Delta z \sum_{n=0}^{N_t}\sum_{j=0}^{N_z}c_j^n(\nu,\mu,\xi)}{\Delta t \Delta z \sum_{n=0}^{N_t}\sum_{j=0}^{N_z}c_j^n(\nu_0,\mu_0,\xi_0)}\approx\dfrac{\int_0^1\int_0^1 c(z,t;\nu,\mu,\xi) \ dz \ dt}{\int_0^1\int_0^1 c(z,t;\nu_{0},\mu_{0},\xi_{0}) \ dz \ dt},
\end{equation}
where $c_j^n(\nu,\mu,\xi)$ represents the PC numerical solution to \eqref{eq:Continuous_model:ADIM} for the chosen values of the inputs $\nu,$ $\mu$ and $\xi$ ($0\leq n\leq N_t,$ $0\leq j \leq N_z$). 

For our analysis we consider various values of the input variables \((\nu, \mu, \xi)\) in a neighborhood of \((\nu_0, \mu_0, \xi_0)\) and investigate the resulting variations, compared to the base case, of the overall $CdS$ concentration. Specifically, we account for increments and decrements of $1\%$ up to a maximum of $10\%$ for each parameter. For all possible combinations, we apply the numerical method \eqref{eq:PC method ADIM} with stepsizes $\Delta z = \Delta t = \Delta \lambda = 5 \cdot 10^{-3}$ and compute the corresponding values of the function in \eqref{eq:Rel_overall_CdS}. The described procedure, involving $8000$ simulations executed concurrently in a parallel computing environment, yields the plots presented in Figures \ref{fig:3D_C_All} and \ref{fig:2D_C}, which illustrate the joint interplay between the input variables.
\begin{figure}[htbp]
  \centering
  \label{fig:3D_C_All}\includegraphics[width=0.90\linewidth]{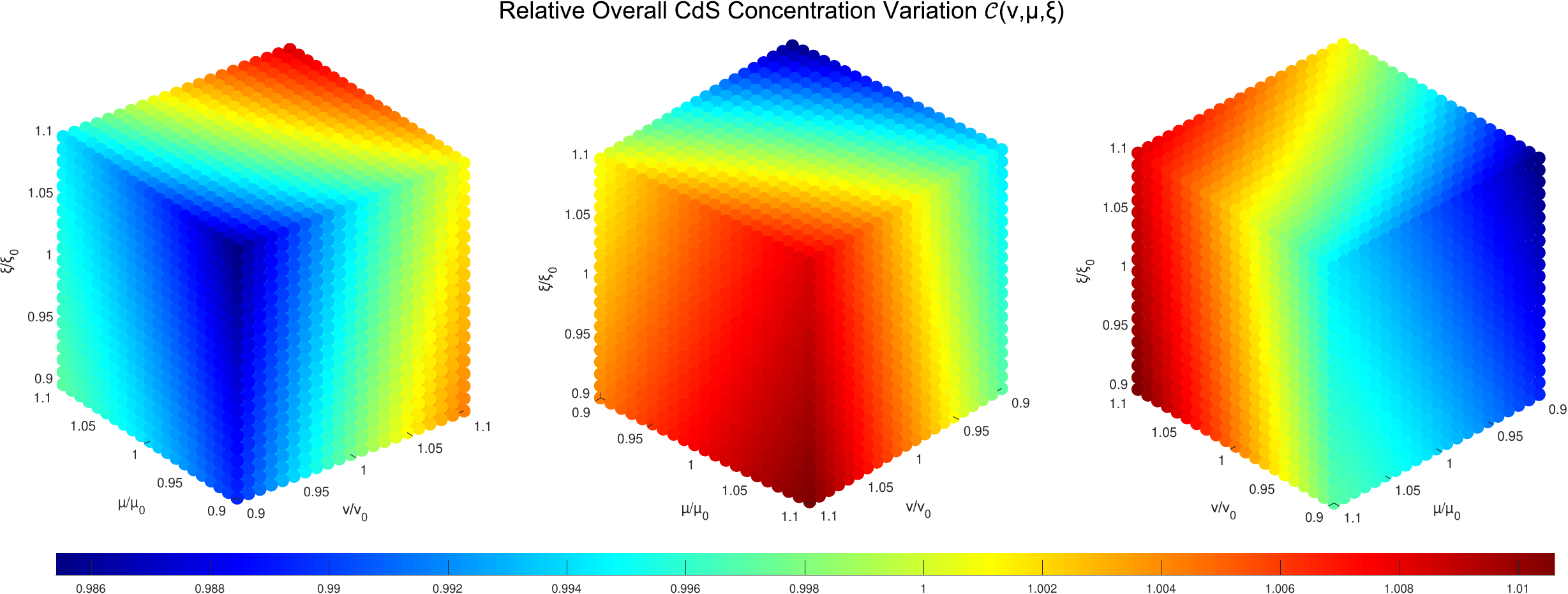}
  \caption{Sensitivity analysis of the dimensionless model \eqref{eq:Continuous_model:ADIM}: relative overall $CdS$ concentration as a function of the parameters variation.}
\end{figure}
\begin{figure}[htbp]
  \centering
  \label{fig:2D_C}\includegraphics[scale=1]{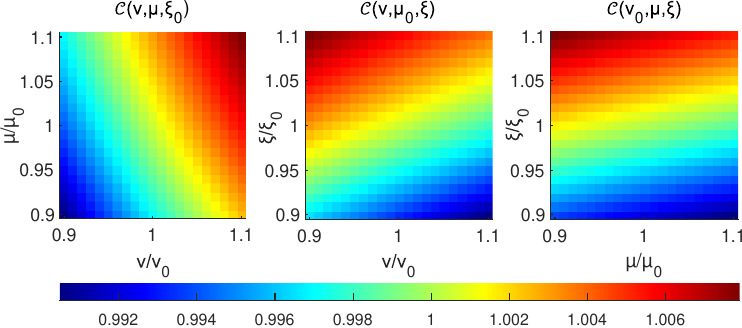}
  \caption{Sensitivity analysis of the dimensionless model \eqref{eq:Continuous_model:ADIM}: relative overall $CdS$ concentration as a function of the parameters variation on the planes $\xi=\xi_0$ (left panel), $\mu=\mu_0$ (central panel) and $\nu=\nu_0$ (right panel).}
\end{figure}

In order to provide quantitative insights on the sensitivity, we adopt the One-factor-At-a-Time (OAT) SA approach (see, for instance, \cite[Section 2.4.2]{Saltelli_2007}), which consists in altering only a single input parameter between multiple simulations. The outcomes of the OAT experiments, presented in Table \ref{tab:Sesitivity}, indicate that  $\mu$ and $\xi$ exert the greatest and the lowest impact on the relative overall $CdS$ concentration, respectively. Following the methodology outlined in \cite{Ciavolella2024,Sensitivity_Natalini}, we measure the sensitivity via the approximated partial derivatives
\begin{equation*}
    \begin{split}
        & \dfrac{\partial \mathcal{C}}{\partial \nu}(\nu_0 \pm \delta^k_\nu ,\mu_0,\xi_0)\approx\dfrac{\mathcal{C}(\nu_0 \pm \delta^{k+1}_\nu,\mu_0,\xi_0)-\mathcal{C}(\nu_0 \pm \delta^k_\nu,\mu_0,\xi_0)}{\delta^{k+1}_\nu-\delta^{k}_\nu}, \\
        & \dfrac{\partial \mathcal{C}}{\partial \mu}(\nu_0,\mu_0\pm \delta_\mu^{k},\xi_0)\approx\dfrac{\mathcal{C}(\nu_0,\mu_0\pm \delta_\mu^{k+1},\xi_0)-\mathcal{C}(\nu_0,\mu_0\pm \delta_\mu^{k},\xi_0)}{\delta^{k+1}_\mu-\delta^{k}_\mu}, \\
        & \dfrac{\partial \mathcal{C}}{\partial \xi}(\nu_0,\mu_0,\xi_0\pm \delta_\xi^{k})\approx\dfrac{\mathcal{C}(\nu_0,\mu_0,\xi_0\pm \delta_\xi^{k+1})-\mathcal{C}(\nu_0,\mu_0,\xi_0\pm \delta_\xi^{k})}{\delta^{k+1}_\xi-\delta^{k}_\xi},
    \end{split}
\end{equation*}
where $\delta^k_\theta=\frac{k \theta_0}{100}$ for $\theta\in\{\nu,\mu,\xi\}$ and $k=0,\dots,10.$ The numerical simulations in Figure \ref{fig:1D_C} highlight the monotonic behavior of the function $\mathcal{C}$, showing that it increases as $\nu$ or $\mu$ increases. Conversely, higher values of the input $\xi$ correspond to lower values of $\mathcal{C}$. 

\begin{figure}[htbp]
  \centering
  \label{fig:1D_C}\includegraphics[scale=1]{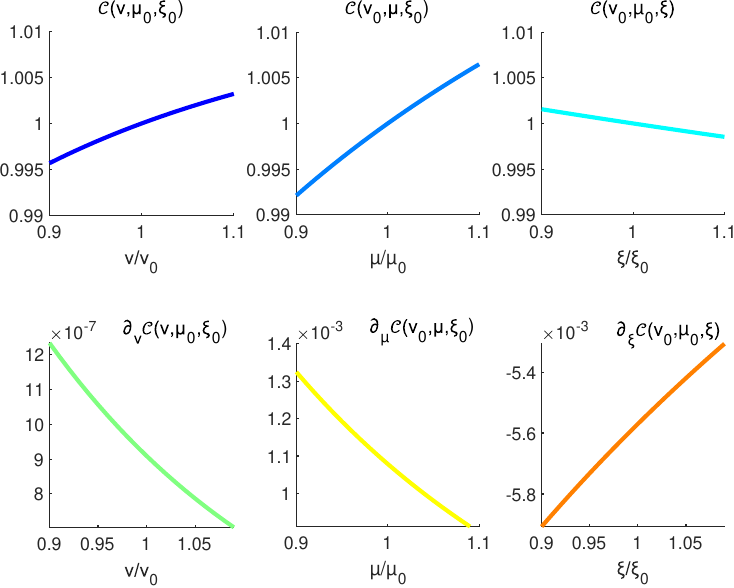}
  \caption{Sensitivity analysis via the OAT approach of the dimensionless model \eqref{eq:Continuous_model:ADIM}: relative overall $CdS$ concentrations and corresponding derivatives.}
\end{figure}

\begin{table}[htbp]
\footnotesize
\caption{Sensitivity analysis via the OAT approach of the dimensionless model \eqref{eq:Continuous_model:ADIM}: for $\nu=(1 \pm \delta_\nu) \nu_0,$ $\mu=(1\pm \delta_\mu) \mu_0$ and $\xi=(1 \pm \delta_\xi) \xi_0,$ the values of $\mathcal{C}$ are compared to $\mathcal{C}(\nu_0, \mu_0, \xi_0)=1.$ }\label{tab:Sesitivity}
\begin{center}
    \begin{tabular}{c|c||c|c||c|c}
        \textbf{Par. Var.} & \textbf{Function}  $\bm{\mathcal{C}}$ & \textbf{Par. Var.} & \textbf{Function}  $\bm{\mathcal{C}}$ & \textbf{Par. Var.} & \textbf{Function}  $\bm{\mathcal{C}}$ \\
        $\bm{\delta_\nu}$ & \textbf{Variation}   & $\bm{\delta_\mu}$ & \textbf{Variation} & $\bm{\delta_\xi}$  & \textbf{Variation}   \\
        \hline \rule{-3pt}{9pt}
         $+10 \%$ & $+0.321 \%$ & $+10 \%$ & $+0.648 \%$ & $+10 \%$ & $-0.147 \%$\\
    $+9 \%$ & $+0.292 \%$ & $+9 \%$ & $+0.588 \%$ & $+9 \%$ & $-0.132 \%$ \\
    $+8 \%$ & $+0.264 \%$ & $+8 \%$ & $+0.528 \%$ & $+8 \%$ & $-0.118 \%$\\
    $+7 \%$ & $+0.234 \%$ & $+7 \%$ & $+0.466 \%$ & $+7 \%$ & $-0.104 \%$\\
    $+6 \%$ & $+0.203 \%$ & $+6 \%$ & $+0.403 \%$ & $+6 \%$ & $-0.089 \%$ \\
    $+5 \%$ & $+0.172 \%$ & $+5 \%$ & $+0.339 \%$ & $+5 \%$ & $-0.074 \%$ \\
    $+4 \%$ & $+0.139 \%$ & $+4 \%$ & $+0.274 \%$ & $+4 \%$ & $-0.060 \%$ \\
    $+3 \%$ & $+0.106 \%$ & $+3 \%$ & $+0.207 \%$ & $+3 \%$ & $-0.045 \%$ \\
    $+2 \%$ & $+0.072 \%$ & $+2 \%$ & $+0.140 \%$ & $+2 \%$ & $-0.030 \%$ \\
    $+1 \%$ & $+0.036 \%$ & $+1 \%$ & $+0.070 \%$ & $+1 \%$ & $-0.015 \%$ \\
    $-1 \%$ & $-0.038 \%$ & $-1 \%$ & $-0.072 \%$ & $-1 \%$ & $+0.015 \%$ \\
    $-2 \%$ & $-0.076 \%$ & $-2 \%$ & $-0.145 \%$ & $-2 \%$ & $+0.030 \%$ \\
    $-3 \%$ & $-0.116 \%$ & $-3 \%$ & $-0.220 \%$ & $-3 \%$ & $+0.046 \%$ \\
    $-4 \%$ & $-0.157 \%$ & $-4 \%$ & $-0.296 \%$ & $-4 \%$ & $+0.061 \%$ \\
    $-5 \%$ & $-0.199 \%$ & $-5 \%$ & $-0.374 \%$ & $-5 \%$ & $+0.077 \%$ \\
    $-6 \%$ & $-0.243 \%$ & $-6 \%$ & $-0.453 \%$ & $-6 \%$ & $+0.092 \%$ \\
    $-7 \%$ & $-0.288 \%$ & $-7 \%$ & $-0.535 \%$ & $-7 \%$ & $+0.108 \%$ \\
    $-8 \%$ & $-0.334 \%$ & $-8 \%$ & $-0.617 \%$ & $-8 \%$ & $+0.124 \%$ \\
    $-9 \%$ & $-0.382 \%$ & $-9 \%$ & $-0.702 \%$ & $-9 \%$ & $+0.139 \%$ \\
    $-10 \%$ & $-0.431 \%$ & $-10 \%$ & $-0.789 \%$ & $-10 \%$ & $+0.155 \%$ \\
    \hline
    \end{tabular} 
\end{center}
\end{table}

From our OAT analysis, we conclude that the model \eqref{eq:Continuous_model:ADIM} exhibits relatively low sensitivity to perturbations of the parameters. Specifically, small percentage variations in the inputs yield a change in the output \eqref{eq:Rel_overall_CdS} that is an order of magnitude smaller than the perturbations themselves. This property demonstrates the model's robustness and reliability for practical applications, indicating that minor inaccuracies in parameter estimation do not significantly affect the representation of the phenomenon.


\section{Conclusions and Future Perspectives}\label{sec:Conclusions}
In this manuscript, we formulated a comprehensive mathematical framework to describe and simulate the degradation of cadmium yellow in pictorial matrices. Firstly, we proposed a model based on non-linear integro-differential equations that integrates Arrhenius and Beer-Lambert laws to track the photochemical conversion of cadmium sulfide to cadmium sulfate. We then developed a quadratically convergent, positivity-preserving numerical method built upon a predictor-corrector discretization of an exponential reformulation of the equations. Furthermore, we provided several simulations which demonstrate the effectiveness of the model in capturing some key aspects of the phenomenon, such as the passivation effect due to $CdSO_4$ accumulation on the surface and the increased degradation resulting from ultraviolet light exposure.  These outcomes, along with the sensitivity analysis we performed, confirm the robustness of the proposed framework as a reliable tool for assessing cadmium pigments deterioration.

Given the promising results of our work, there are some potential extensions we plan to investigate in future research. In this context, the implementation of a more involved light-penetration mechanism based on the Kubelka-Munk theory \cite{ciani2005determination,ciani2005light,ciani2005photodegradation} could further enhance the model's accuracy and applicability. The introduction of equations that account for the oxidation and yellowing of the binder used with the pigments may reveal of high interest, as well. Moreover, a calibration of the model's parameters with experimental data could provide valuable insights into the relationship between changes in $CdS$ concentration and observable discoloration in artworks.

\section*{Acknowledgments}
Maurizio Ceseri passed away before this manuscript was finalized. We have attempted to present the results of our collaboration in accordance with his high standards. This paper is dedicated to him, whose untimely death is a great loss to us all. \\
This work has been performed under the Project PE 0000020 CHANGES - CUP  B53C22003780006, NRP Mission 4 Component 2 Investment 1.3, Funded by the European Union - NextGenerationEU and in the auspices of the \textit{National Group for Mathematical Analysis, Probability and their Applications} (GNAMPA) and of the \textit{Italian National Group for Scientific Computing} (GNCS) of the National Institute for Advanced Mathematics (INdAM). The work of MP is partially supported by the INdAM under the GNCS Project CUP E53C23001670001.




\end{document}